

\documentclass[11pt]{amsart}


\usepackage[T1]{fontenc}
\usepackage[utf8]{inputenc}
\usepackage{lmodern}

\usepackage[american]{babel}
\usepackage[babel, final]{microtype}
\usepackage{amssymb}

\usepackage[pdftex,%
  dvipsnames,%
]{xcolor}
\usepackage[pdftex, final]{graphicx}
\usepackage{pinlabel}
\usepackage[pdftex,%
  letterpaper,%
  includehead,%
  includefoot,%
  nomarginpar,%
  lmargin=1in,%
  rmargin=1in,%
  tmargin=1in,%
  bmargin=1in,%
]{geometry}
\usepackage[pdftex,%
  final,%
  colorlinks=true,%
  linkcolor=NavyBlue,%
  citecolor=NavyBlue,%
  filecolor=NavyBlue,%
  menucolor=NavyBlue,%
  urlcolor=NavyBlue,%
  bookmarks=true,%
  bookmarksdepth=3,%
  bookmarksnumbered=true,%
  bookmarksopen=true,%
  bookmarksopenlevel=2,%
]{hyperref}
\hypersetup{
  pdfinfo={
    Title={Upper bounds for the Lagrangian cobordism relation on Legendrian 
      links},
    Author={Joshua M. Sabloff, David Shea Vela-Vick, C.-M. Michael Wong},
    Subject={Contact geometry, knot theory, Lagrangian cobordism},
    Keywords={Legendrian links, Lagrangian cobordisms}
  }
}


\input{macros}




\hyphenation{Thurs-ton}
\hyphenation{mo-no-poles}
\hyphenation{sur-ger-y}

\newcommand{\set}[1]{\mathchoice%
  {\left\lbrace #1 \right\rbrace}%
  {\lbrace #1 \rbrace}%
  {\lbrace #1 \rbrace}%
  {\lbrace #1 \rbrace}%
}


\newcommand{\union}{\cup}
\newcommand{\bigunion}{\bigcup}
\newcommand{\intersect}{\cap}

\newcommand{\comp}{\circ}
\newcommand{\cross}{\times}

\newcommand{\numset}[1]{\mathbb{#1}}

\newcommand{\R}{\numset{R}}






\newcommand{\bdy}{\partial}

\newcommand{\unknot}{\mathord{\bigcirc}}

\newcommand{\xistd}{\xi_{\mathrm{std}}}
\DeclareMathOperator{\tb}{tb}
\DeclareMathOperator{\rot}{r}

\newcommand{\disjunion}{\sqcup}

\newcommand*{\leg}{\Lambda}
\newcommand*{\legp}{\leg_+}
\newcommand*{\legm}{\leg_-}
\newcommand*{\legpm}{\leg_\pm}

\newcommand*{\legone}{\leg}
\newcommand*{\legtwo}{\leg'}

\newcommand*{\link}{\leg}

\newcommand*{\linkm}{\link_-}

\newcommand*{\linkone}{\link}
\newcommand*{\linktwo}{\link'}

\newcommand*{\seif}{\Sigma}

\newcommand*{\seifone}{\seif}
\newcommand*{\seiftwo}{\seif'}

\newcommand*{\cob}{L}

\newcommand*{\cobone}{\cob}
\newcommand*{\cobtwo}{\cob'}

\newcommand*{\graph}{G}

\newcommand*{\graphone}{\graph}
\newcommand*{\graphtwo}{\graph'}

\newcommand*{\graphoneb}{\overline{\graphone}}
\newcommand*{\unknotoneb}{\overline{\Lambda}_U}
\newcommand*{\graphonet}{\widetilde{\graphone}}

\newcommand*{\handles}{\mathcal{H}}

\newcommand*{\handlesone}{\handles}
\newcommand*{\handlestwo}{\handles'}

\newcommand*{\divset}{\Gamma}

\newcommand*{\cobrel}{\preceq}

\newcommand*{\arc}{\gamma}
\newcommand*{\disk}{D}

\DeclareMathOperator{\Surg}{Surg}

\DeclareMathOperator{\tw}{tw}

\newcommand*{\order}{\mathfrak{o}}

\newcommand*{\relgen}{g_{\cob}}

\newcommand*{\maxunknot}{\Upsilon}
\renewcommand*{\unknot}{\leg_U}

\newcommand*{\unknotone}{\unknot}
\newcommand*{\unknottwo}{\unknot'}

\renewcommand*{\xistd}{\xi_0}
\newcommand*{\alphastd}{\alpha_0}

\newcommand*{\diskstd}{\disk_0}

\newcommand*{\linkstd}{\link_0}

\newcommand*{\pstab}{S_+}
\newcommand*{\nstab}{S_-}

\newcommand*{\strand}{\eta}

\newcommand*{\smpath}{\gamma}

\newcommand*{\legsint}{\boldsymbol{\leg}}
\newcommand*{\legsuplow}{\boldsymbol{\leg}^*}
\newcommand*{\cobsint}{\mathbf{L}}


\title[Upper bounds for the Lagrangian cobordism relation]{Upper bounds for the 
  Lagrangian cobordism relation on Legendrian links}

\author[Joshua M. Sabloff]{Joshua M. Sabloff}
\address{Department of Mathematics and Statistics \\ Haverford College \\  
  Haverford, PA 19041}
\email{\href{mailto:jsabloff@haverford.edu}{jsabloff@haverford.edu}}
\urladdr{\url{http://ww3.haverford.edu/math/jsabloff/}}

\author[David Shea Vela-Vick]{David Shea Vela-Vick}
\address{Department of Mathematics \\ Louisiana State University \\ Baton 
  Rouge, LA 70803}
\email{\href{mailto:shea@math.lsu.edu}{shea@math.lsu.edu}}
\urladdr{\url{http://www.math.lsu.edu/~shea/}}
\thanks{DSV was partially supported by NSF Grant DMS-1907654 and Simons 
  Foundation Grant 524876.}

\author[C.-M. Michael Wong]{C.-M. Michael Wong}
\address{Department of Mathematics \\ Dartmouth College \\ Hanover, NH 03755}
\email{\href{mailto:wong@math.dartmouth.edu}{wong@math.dartmouth.edu}}
\urladdr{\url{https://math.dartmouth.edu/~wong/}}
\thanks{CMMW was partially supported by NSF Grant DMS-2039688 and an AMS-Simons 
  Travel Grant.}

\keywords{Legendrian links, Lagrangian cobordisms}
\subjclass[2020]{57K33 (primary); 57K10, 57D12 (secondary)}


\begin{document}


\begin{abstract}

  Lagrangian cobordism induces a preorder on the set of Legendrian links in any 
  contact $3$-manifold. We show that any finite collection  of null-homologous 
  Legendrian links in a tight contact $3$-manifold with a common rotation 
  number has an upper bound with respect to the preorder. In particular, we 
  construct an exact Lagrangian cobordism from each element of the collection 
  to a common Legendrian link.  This construction allows us to define a notion 
  of minimal Lagrangian genus between any two null-homologous Legendrian links 
  with a common rotation number. 

\end{abstract}


\maketitle


\section{Introduction}
\label{sec:intro}

The relation $\cobrel$ defined by (exact, orientable) Lagrangian cobordism 
between Legendrian submanifolds in the symplectization of the contact manifold 
raises a host of surprisingly subtle structural questions. While the Lagrangian 
cobordism relation is trivially a preorder (i.e.\ is reflexive and transitive), 
it is not symmetric \cite{BalSiv18:KHMLeg, Cha10:LagConc, 
  CorNgSiv16:LagConcObstructions}; it is unknown whether the relation is a 
partial order.  Further, not every pair of Legendrians is related by Lagrangian 
cobordism, with the first obstructions coming from the classical invariants:  
For links $\legpm$ in $\R^3$, if $\legm \cobrel \legp$ via the Lagrangian $\cob 
\subset \R \cross \R^3$, then $\rot(\legp) = \rot(\legm)$ and $\tb(\legp) - \tb 
(\legm) = - \chi (\cob)$ \cite{Cha10:LagConc}.  A growing toolbox of 
non-classical obstructions has been developed to detect this phenomenon; see, 
just to begin, \cite{BalSiv18:KHMLeg, BalLidWon21:LagCobHFK,  
  EkhHonKal16:LagCob, GolJuh19:LOSSConc, Pan17:LagCobAug, 
  SabTra13:LagCobObstructions}.  

If two Legendrians are not related by a Lagrangian cobordism,  one may still 
ask if they have a common upper or lower bound with respect to $\cobrel$.  
Implicit in the work of Boranda, Traynor, and Yan \cite{BorTraYan13:MinLeg} is 
that any finite collection of Legendrian links in the standard contact $\R^3$ 
with the same rotation number has a lower bound with respect to $\cobrel$.  In 
another direction, Lazarev \cite{Laz20:MaxContSymp} has shown that any finite 
collection of formally isotopic Legendrians in a contact $(2n+1)$-manifold with 
$n \geq 2$ has an upper bound with respect to a moderate generalization of 
$\cobrel$.

The goal of this paper is to find both lower and upper bounds for finite 
collections of Legendrian links in any tight contact $3$-manifold.  On one 
hand, in contrast to the diagrammatic methods of \cite{BorTraYan13:MinLeg}, our 
topological techniques allow us to find lower bounds in any tight contact 
$3$-manifold, though we also present a refinement of the proof in 
\cite{BorTraYan13:MinLeg} that better suits our goal of constructing upper 
bounds.  On the other hand, in contrast to Lazarev's use of an $h$-principle, 
which restricts his results to higher dimensions, our direct constructions of 
upper bounds work for Legendrian links in dimension $3$.

\begin{theorem}
  \label{thm:main}
  Let $\legone$ and $\legtwo$ be oriented Legendrian links in a tight contact 
  $3$-manifold $(Y, \xi)$, and suppose that there exist Seifert surfaces 
  $\seifone$ and $\seiftwo$ for which $\rot_{[\seifone]} (\legone) = 
  \rot_{[\seiftwo]} (\legtwo)$.   Then there exist oriented Legendrian links 
  $\leg_\pm \subset (Y, \xi)$ such that $\legm \cobrel \legone \cobrel \legp$ 
  and $\legm \cobrel \legtwo \cobrel \legp$.
\end{theorem}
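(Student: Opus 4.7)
The plan is to construct the required Legendrian links $\legm$ and $\legp$ separately, since the tools for downward and upward Lagrangian cobordism are quite different in character.  For the lower bound $\legm$, I would adapt and refine the diagrammatic argument of \cite{BorTraYan13:MinLeg} so that it applies in an arbitrary tight $(Y,\xi)$.  The idea is to begin with a ``standard'' Legendrian link in a Darboux chart, consisting of a large number of max-$\tb$ Legendrian unknots carrying the correct combined rotation number (as measured against $\seifone$, respectively $\seiftwo$), together with enough auxiliary components to see the link type of $\legone$, respectively $\legtwo$.  From such a standard configuration one can reach both $\legone$ and $\legtwo$ by repeated application of the basic upward moves that produce Lagrangian cobordisms: Legendrian isotopies (cylinder cobordisms), minimum cobordisms (filling a max-$\tb$ unknot with a disk), and saddle cobordisms (pinch moves along Legendrian arcs).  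The rotation-number hypothesis is what makes the combinatorial bookkeeping of these moves close up, and this is the main refinement needed to adapt the proof of \cite{BorTraYan13:MinLeg}.

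For the upper bound $\legp$, which is the main novelty, the plan is to carry out a parallel construction upward from each of $\legone$ and $\legtwo$, arranging the moves so that both sequences terminate at a common target.  The Seifert surfaces $\seifone$ and $\seiftwo$ serve as a roadmap: I would handle-decompose each surface into $0$- and $1$-handles and try to realize each piece by a minimum cobordism (for a $0$-handle) or a saddle cobordism (for a $1$-handle), yielding a Lagrangian cobordism from $\legone$, respectively $\legtwo$, to some Legendrian ``spine'' of $\seifone$, respectively $\seiftwo$.  The equality of rotation numbers should allow me to perform further upward saddle and minimum moves on both spines until they coincide, up to Legendrian isotopy, with a single link $\legp$ that depends only on the common rotation number and some combinatorial data of the surfaces.

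The principal obstacle is the upper-bound construction.  Going upward in $\cobrel$ is much more rigid than going downward: the only primitive moves at our disposal are Legendrian isotopy, minimum cobordism, and saddle cobordism, and each saddle move is permissible only when the endpoints of the surgery arc are joined by a Legendrian arc that is unknotted in a suitable sense.  Ensuring that such arcs exist simultaneously for both $\legone$ and $\legtwo$, and that the resulting sequences of moves actually terminate at Legendrian-isotopic links, is the heart of the problem.  I expect the proof to hinge on the construction of a normal form for null-homologous Legendrians with a given rotation number up to upward Lagrangian cobordism, perhaps expressed in terms of a preferred Seifert surface and its handle decomposition, and to use the rotation-number hypothesis to match these normal forms between $\legone$ and $\legtwo$.
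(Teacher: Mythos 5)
Your lower-bound half is broadly in the spirit of what is actually done (the paper refines \cite{BorTraYan13:MinLeg} diagrammatically in $\R^3$, and for a general tight $(Y,\xi)$ replaces diagrams by convex surface theory: make a Seifert surface convex, cut its bands after Legendrian realization to descend to a Legendrian unknot, and match the two unknots via the Eliashberg--Fraser classification \cite{EliFra98:UnknotTransSimple}; a Darboux-chart construction alone will not reach an arbitrary link in $Y$, so this part of your plan is underspecified but repairable). The genuine gap is in the upper bound, which is the main content of the theorem. You propose to march upward from $\legone$ and from $\legtwo$ separately, guided by handle decompositions of $\seifone$ and $\seiftwo$, and to make the two sequences terminate at a common link by appealing to a ``normal form for null-homologous Legendrians with a given rotation number up to upward Lagrangian cobordism'' --- but you supply no such normal form and no mechanism for the matching, and you acknowledge this is the heart of the problem. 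Note also that nothing resembling a normal form depending only on the rotation number can exist: by Chantraine's formula $\tb(\legp)-\tb(\legm)=-\chi(\cob)$ \cite{Cha10:LagConc}, a fixed $\legp$ can only lie above Legendrians of bounded Thurston--Bennequin number, while (say) maximal positive torus knots have rotation number $0$ and arbitrarily large $\tb$; so the target must depend on the pair, and producing it is essentially the theorem itself. Moreover, a Seifert surface of $\legone$ naturally encodes a cobordism from an unknot \emph{up to} $\legone$ (a lower bound), not moves upward out of $\legone$, so the surfaces do not serve as the roadmap you want for this half.

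The idea you are missing is that the upper bound is extracted from the lower-bound construction rather than built independently. \fullref{prop:min_general} produces not just a common lower bound $\legm$ but two Legendrian handle graphs $(\graphone,\legm)$ and $(\graphtwo,\legm)$ on the \emph{same} underlying link, with $\Surg(\graphone,\legm)\cong\legone$ and $\Surg(\graphtwo,\legm)\cong\legtwo$. After isotoping the surgery arcs of the two graphs to be disjoint, one sets $\legp=\Surg(\graphone\union\graphtwo,\legm)$. Because Legendrian ambient surgery is local (\fullref{rmk:local-surgery}) and surgeries along disjoint arcs commute (\fullref{prop:graph-surg-order}), performing the $\graphone$-surgeries first realizes $\legone$ as a collared slice of a cobordism $\legm\cobrel\legp$, and performing the $\graphtwo$-surgeries first does the same for $\legtwo$; hence $\legone\cobrel\legp$ and $\legtwo\cobrel\legp$, with no separate matching step. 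To repair your proof, replace the upward normal-form strategy by this reversal: construct the descent to a common $\legm$ while recording the surgery arcs as a handle graph, then attach \emph{both} families of handles to obtain $\legp$.
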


\begin{remark}
  For Legendrian links in $\R^3$, the rotation number may be defined without 
  reference to Seifert surfaces, and the hypotheses merely require 
  $\rot(\legone) = \rot(\legtwo)$.
\end{remark}

\begin{example}
  In \fullref{fig:main-example1}, we display an upper bound for the maximal 
  Legendrian right-handed trefoil and a Legendrian $m(5_2)$ knot.  These two 
  Legendrian knots are not related by Lagrangian cobordism.  To see why, note 
  that any Lagrangian cobordism between them must be a concordance since they 
  have the same Thurston--Bennequin number, but no such concordance exists even 
  topologically.

  \begin{figure}[htbp]
    \includegraphics[width=4in]{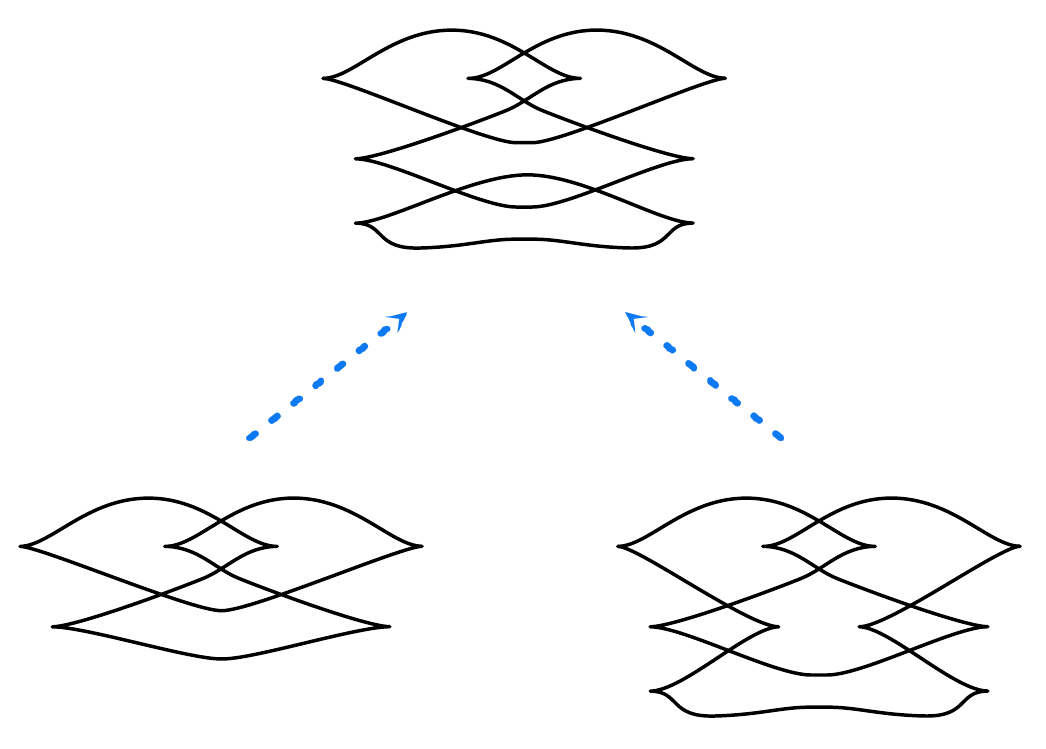}
    \caption{An upper bound for the maximal right-handed trefoil and an 
      $m(5_2)$ knot.}
    \label{fig:main-example1}
  \end{figure}
\end{example} 

\begin{example}
  In \fullref{fig:main-example2}, we display an upper bound for the maximal 
  Legendrian unknot and the maximal Legendrian figure-eight knot.  Once again, 
  these two Legendrian knots are not related by Lagrangian cobordism.  The fact 
  that the figure-eight has lower Thurston--Bennequin number shows that there 
  cannot be a cobordism from the unknot to the figure-eight; the fact that the 
  figure-eight has two normal rulings shows that there cannot be a cobordism 
  from the figure-eight to the unknot \cite[Theorem 
  2.7]{CorNgSiv16:LagConcObstructions}.

  \begin{figure}[htbp]
    \includegraphics[width=4in]{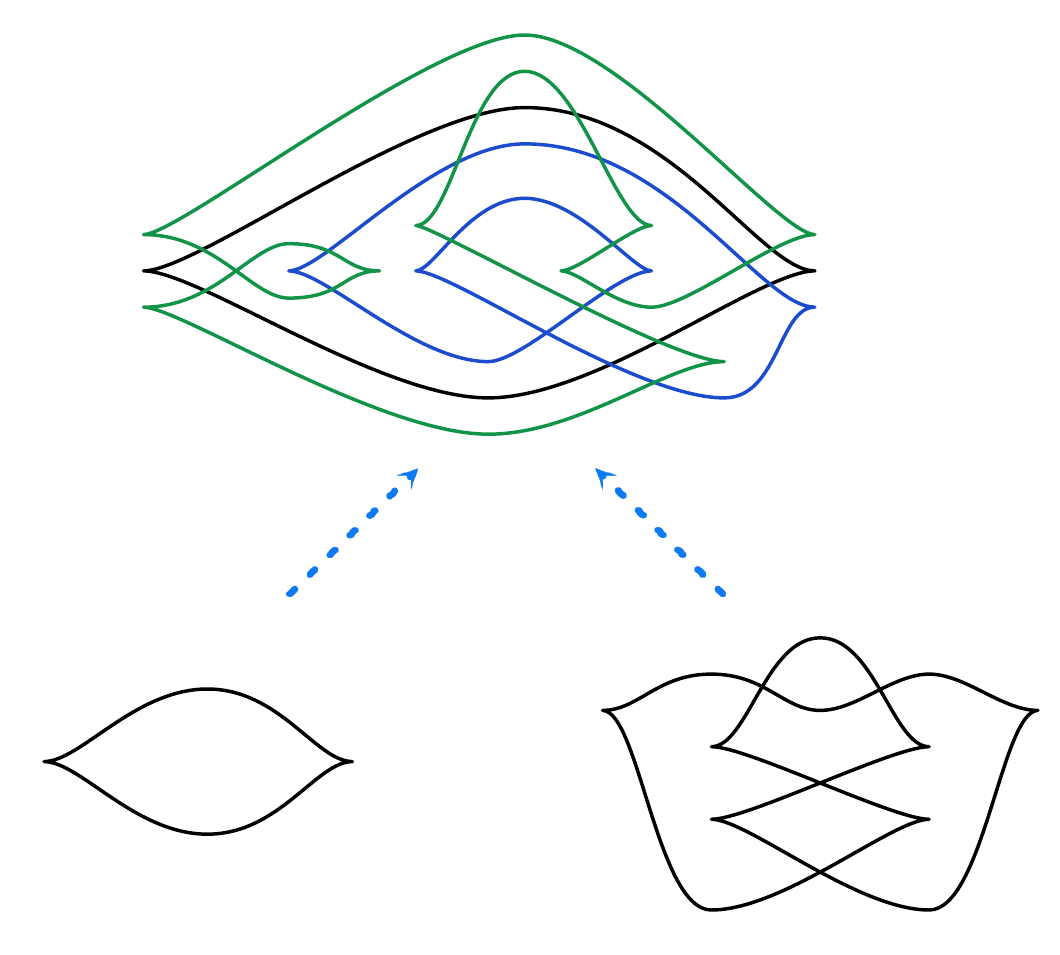}
    \caption{An upper bound for the maximal unknot and the maximal figure-eight 
      knot.  The colors in the diagram of the upper bound are only meant to 
      distinguish components of the link to improve readability.}
    \label{fig:main-example2}
  \end{figure}
\end{example}

In fact, we prove the following strengthened version of \fullref{thm:main}.

\begin{proposition}
  \label{prop:slices}
  Under the same hypotheses of \fullref{thm:main}, there exist oriented 
  Legendrian links $\legm, \legp \subset Y$ and oriented exact decomposable 
  Lagrangian cobordisms $\cobone$ and $\cobtwo$ from $\legm$ to $\legp$, such 
  that
  \begin{itemize}
    \item The Legendrian link $\legone$ appears as a collared slice of 
      $\cobone$;
    \item The Legendrian link $\legtwo$ appears as a collared slice of 
      $\cobtwo$; and
    \item $\cobone$ and $\cobtwo$ are exact-Lagrangian isotopic.
  \end{itemize}
\end{proposition}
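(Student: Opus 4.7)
The strategy is to obtain the cobordisms $\cobone$ and $\cobtwo$ as concatenations of a common lower-bound cobordism and a common upper-bound cobordism, and then to argue that the two resulting Lagrangian surfaces coincide up to exact-Lagrangian isotopy.

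First, I plan to apply the common lower bound construction (a refinement of the Boranda--Traynor--Yan technique adapted to arbitrary tight contact $3$-manifolds) to produce a Legendrian link $\legm \subset (Y, \xi)$ together with decomposable exact Lagrangian cobordisms $M_i \colon \legm \cobrel \leg_i$ for $i = 1, 2$. Next, I plan to construct a common upper bound $\legp$ equipped with decomposable exact Lagrangian cobordisms $N_i \colon \leg_i \cobrel \legp$. Concatenating then yields cobordisms $\cob_i = N_i \comp M_i$ from $\legm$ to $\legp$, each decomposable by construction. Inserting a trivial Lagrangian cylinder over $\leg_i$ at the concatenation interface immediately guarantees that $\leg_i$ appears as a collared slice of $\cob_i$, which takes care of the first two bullet points.

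The substantive content is then the exact-Lagrangian isotopy between $\cobone$ and $\cobtwo$. Here the plan is to coordinate the upper-bound pieces $N_1$ and $N_2$ so that $\cob_1$ and $\cob_2$ decompose into the same elementary moves applied to $\legm$, differing only in the order in which those moves are performed. Because the elementary moves of a decomposable exact Lagrangian cobordism (Legendrian isotopies, $0$-handle births of maximal Thurston--Bennequin unknots, and $1$-handle pinch moves) can be reordered freely by compactly supported Hamiltonian isotopies whenever their supports are disjoint in $\R \cross Y$, any two realizations of the same abstract handle sequence will be exact-Lagrangian isotopic.

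The main obstacle is the coordinated design of $N_1$ and $N_2$: they must complete $M_1$ and $M_2$, respectively, to a single ``universal'' decomposable cobordism from $\legm$ to $\legp$, up to permutations of commuting handles. A natural approach is to choose $\legp$ as the output of a fixed master sequence of $0$-handles and $1$-handles applied to $\legm$, arranged so that both $\legone$ and $\legtwo$ appear as intermediate slices of this cobordism under suitable reorderings of spatially disjoint pieces. Once such a symmetric master sequence has been produced --- which I expect to be the technical heart of the proof and to require careful use of handle slides together with the explicit structure of the common lower bound $\legm$ --- the Hamiltonian isotopy argument sketched above will supply the required exact-Lagrangian isotopy between $\cobone$ and $\cobtwo$.
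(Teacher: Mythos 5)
Your high-level architecture matches the paper's: produce a common lower bound, complete both sides to a common upper bound, concatenate, and get the exact-Lagrangian isotopy by commuting disjointly-supported handle attachments (the proof indeed parallels Lazarev's scheme). However, the step you yourself flag as ``the technical heart'' --- the coordinated construction of $N_1$ and $N_2$ so that $\cobone$ and $\cobtwo$ are the \emph{same} handle system attached to $\legm$ in two different orders --- is precisely the content of \fullref{prop:slices}, and your proposal does not supply it. Merely knowing that decomposable cobordisms $M_i \colon \legm \cobrel \leg_i$ exist gives you no control over how their handles interact: in a general decomposable cobordism one surgery may only become possible \emph{after} another has been performed (cf.\ \fullref{rmk:handle-graph-limitation} and \fullref{fig:ordered-attachment}), so there is no a priori reason the elementary pieces of $M_1$, $M_2$, $N_1$, $N_2$ can be rearranged into a single ``master sequence'' of spatially disjoint, freely commuting moves; your appeal to handle slides is a hint that this could get genuinely complicated, and no argument is given that it terminates.

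The paper's resolution is to strengthen the lower-bound statement itself: \fullref{prop:min_general} produces not just cobordisms but Legendrian \emph{handle graphs} $(\graphone, \legm)$ and $(\graphtwo, \legm)$ on the same underlying link, in which every surgery arc is attached to $\legm$ from the outset and satisfies the surgery-disk conditions simultaneously. After a small isotopy making $\handlesone$ and $\handlestwo$ disjoint, the union $\graph_+ = \graphone \union \graphtwo$ is again a handle graph on $\legm$; setting $\legp = \Surg(\graph_+, \legm)$, the cobordism performing the $\handlesone$-surgeries first passes through $\legone$ as a collared slice, the one performing the $\handlestwo$-surgeries first passes through $\legtwo$, and \fullref{prop:graph-surg-order} gives the exact-Lagrangian isotopy because both are realizations of $\cob(\graph_+, \legm)$ in different orders --- no handle slides needed. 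So to repair your proposal you would need either to prove your lower-bound construction outputs this kind of simultaneous, commuting handle data (which is what the handle-graph formalism encodes), or to find some other mechanism forcing the two concatenated cobordisms to agree; as written, the isotopy claim rests on an unconstructed object.
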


\begin{remark}
  There are statements analogous to \fullref{thm:main} and 
  \fullref{prop:slices} that hold for unoriented Legendrian links and 
  unoriented (and possibly unorientable) exact Lagrangian cobordisms, for which 
  there are no requirements on the rotation number.
\end{remark}

The main theorem has several interesting consequences.  First, we recall that 
not every Legendrian knot has a Lagrangian filling.  The figure-eight knot in 
\fullref{fig:main-example2} is one such example.  By transitivity, this implies 
that not every Legendrian knot lies at the top of a Lagrangian cobordism from a 
fillable Legendrian.  On the other hand, we have the following corollary of the 
main theorem:

\begin{corollary}
  \label{cor:cob-to-fillable}
  For any Legendrian link $\leg$, there exists a Legendrian link $\legp$ with a 
  Lagrangian filling and a Lagrangian cobordism from $\leg$ to $\legp$.
\end{corollary}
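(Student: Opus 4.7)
The plan is to apply Theorem \ref{thm:main} with $\legone = \leg$ and $\legtwo$ chosen to be a Legendrian link that already admits a Lagrangian filling, and then to stack the filling of $\legtwo$ underneath the Lagrangian cobordism from $\legtwo$ to the common upper bound produced by the theorem.

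Concretely, I would take $\legtwo$ to be a Legendrian unknot $\unknot$ with $\tb(\unknot) = -1$ sitting in a Darboux ball disjoint from $\leg$, chosen so that its rotation number, computed with respect to the obvious bounding disk, matches $\rot_{[\seifone]}(\leg)$ for some Seifert surface $\seifone$ of $\leg$. This is automatic when $\rot_{[\seifone]}(\leg) = 0$, which may always be arranged in the setting of Theorem \ref{thm:main} by judicious choice of $\seifone$ in $(Y, \xi)$; in cases where no such match is possible, one may invoke the unoriented version of Theorem \ref{thm:main} noted in the remark immediately preceding this corollary, which dispenses with the rotation hypothesis entirely. In either case, such a $\unknot$ bounds an exact Lagrangian disk $L_0 \subset \R \cross Y$, so it admits a Lagrangian filling.

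Applying Theorem \ref{thm:main} to $\legone = \leg$ and $\legtwo = \unknot$ then produces oriented Legendrian links $\legm, \legp$ together with exact Lagrangian cobordisms $\cobone \colon \leg \to \legp$ and $\cobtwo \colon \unknot \to \legp$. Stacking $L_0$ beneath $\cobtwo$ in the symplectization, in the standard manner for concatenating exact Lagrangian cobordisms, yields an exact Lagrangian cobordism from $\eset$ to $\legp$, i.e.\ a Lagrangian filling of $\legp$; meanwhile, $\cobone$ provides the required cobordism from $\leg$ to $\legp$. The only real obstacle is the bookkeeping required to satisfy the rotation number hypothesis of Theorem \ref{thm:main}, which determines the choice of $\unknot$ (or, if necessary, the appeal to the unoriented version of the theorem); once this is in place, the corollary follows directly by concatenation.
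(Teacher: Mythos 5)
Your core argument is the paper's: take $\legtwo$ to be the maximal ($\tb = -1$, hence $\rot = 0$) Legendrian unknot, apply \fullref{thm:main} to $\leg$ and this unknot to obtain a common upper bound $\legp$, and concatenate the unknot's exact Lagrangian disk filling with the cobordism from the unknot to $\legp$ to produce a filling of $\legp$, while the other leg of the upper bound supplies the cobordism from $\leg$ to $\legp$. This is precisely the proof given in the paper, which states it in one sentence and leaves the concatenation implicit.

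One subsidiary claim in your write-up is false and worth correcting: you cannot in general arrange $\rot_{[\seifone]}(\leg) = 0$ ``by judicious choice of $\seifone$.'' In $\R^3$ or $S^3$ the rotation number is independent of the choice of Seifert surface, and in a general tight $(Y,\xi)$ changing the relative class alters it only by evaluations of the Euler class of $\xi$ on closed $2$-cycles, which need not produce the required value. When $\rot(\leg) \neq 0$ your fallback---the unoriented version of the theorem mentioned in the remark---is indeed the only route, but note what it buys: the resulting cobordism and filling may be non-orientable, which is weaker than the standing convention of \fullref{def:cobordism}. This is not a defect peculiar to your argument (the paper's one-line proof is equally silent), and with the orientable convention some restriction is unavoidable, since oriented exact Lagrangian cobordism preserves the rotation number and an oriented exact filling forces it to vanish; but you should state the caveat rather than suggest the rotation hypothesis can always be engineered away.
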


The proof simply requires us to apply \fullref{thm:main} with $\legone$ being 
the given Legendrian and $\legtwo$ being the maximal Legendrian unknot.  The 
upper bound $\legp$ is Lagrangian fillable since there is a cobordism to it 
from the unknot.

A second consequence of the main theorem is that we are able to define a notion 
of the minimal genus of a Lagrangian cobordism between \emph{any} two 
Legendrian links with the same rotation number.  Roughly speaking, we define a 
Lagrangian quasi-cobordism between $\legone$ and $\legtwo$ to be a sequence 
$\legone = \leg_0, \leg_1, \cdots, \leg_n = \legtwo$ of Legendrian links 
together with upper (or lower) bounds between each of $\leg_i$ and 
$\leg_{i+1}$.  The genus of the quasi-cobordism is the genus of the (smooth) 
composition of the underlying Lagrangian cobordisms between the $\leg_i$ and 
their bounds; we may then define $g_L(\legone, \legtwo)$ to be the minimal 
genus of such a Lagrangian quasi-cobordism. When there is a Lagrangian 
cobordism from $\legone$ to $\legtwo$ and $\legone$ is fillable, $\relgen 
(\legone, \legtwo)$ agrees with the relative smooth genus $g_s (\legone, 
\legtwo)$; see \fullref{lem:genus-comparison}.

The remainder of the paper is organized as follows:  In 
\fullref{sec:description}, we review key ideas in the definition and 
construction of Lagrangian cobordisms between Legendrian links.  We also define 
the notion of a Legendrian handle graph, which will form the basis of our later 
constructions.  In \fullref{sec:min} and \fullref{sec:min_diag}, we prove that 
any two Legendrians in a tight contact $3$-manifold have a lower bound with 
respect to $\cobrel$, and encode the Lagrangian cobordisms involved with 
Legendrian handle graphs. We present two approaches to this goal: In 
\fullref{sec:min}, we prove the claim for general tight contact $3$-manifolds 
using convex surface theory, while in \fullref{sec:min_diag}, we provide a 
diagrammatic proof in $\R^3$, refining a proof of \cite{BorTraYan13:MinLeg}. We 
then proceed in \fullref{sec:max} to prove \fullref{prop:slices}, and hence 
\fullref{thm:main}.  We end the paper in \fullref{sec:genus} by beginning an 
exploration of Lagrangian quasi-cobordisms and their genera, finishing with 
some open questions.

\subsection*{Acknowledgments}
The authors thank Oleg Lazarev for discussions of his work 
\cite{Laz20:MaxContSymp} that motivated this project and for further dialogue 
once this project began in earnest. Part of the research was conducted while 
the third author was at Louisiana State University. The first author thanks 
Louisiana State University, and the third author thanks Haverford College, for 
their hospitality.

\section{A description of Lagrangian cobordisms}
\label{sec:description}

In this section, we describe Lagrangian cobordisms, how to construct them, and 
how to keep track of those constructions.

\subsection{Lagrangian cobordisms}
\label{ssec:cobordism}

We begin with the formal definition of a Lagrangian cobordism between 
Legendrian links.

\begin{definition}
  \label{def:cobordism}
  Let $\leg_-$ and $\leg_+$ be Legendrian links in the contact manifold $(Y, 
  \xi)$, where $\xi = \ker (\alpha)$ for a contact $1$-form $\alpha$. An 
  (exact, orientable) \emph{Lagrangian cobordism} $\cob$ from $\leg_-$ to 
  $\leg_+$ is an exact, orientable, properly embedded Lagrangian submanifold 
  $\cob \subset (\R \times Y, d (e^t \alpha))$ that satisfies the following:
  \begin{itemize}
    \item There exists $T_+ \in \R$ such that $\cob \cap ([T_+,\infty) \times 
      Y) = [T_+, \infty) \times \leg_+$;
    \item There exists $T_- < T_+$ such that $\cob \cap ((-\infty, T_-] \times 
      Y) = (-\infty, T_-] \times \leg_-$; and
    \item The primitive of $(e^t \alpha) \vert_\cob$ is constant (rather than 
      locally constant) at each cylindrical end of $\cob$.
  \end{itemize}
\end{definition}

Note that the last condition enables us to concatenate Lagrangian cobordisms 
while preserving exactness.

We will use three constructions of Lagrangian cobordisms in this paper, which 
we will call the \emph{elementary Lagrangian cobordisms}:
\begin{description}
  \item[$0$-handle]  Adding a disjoint, unlinked maximal Legendrian unknot 
    $\maxunknot$ to $\link$ induces an exact Lagrangian cobordism from $\link$ 
    to $\link \disjunion \maxunknot$ \cite{BouSabTra15:LagCobGF, 
      EkhHonKal16:LagCob}.
  \item[Legendrian isotopy] A Legendrian isotopy from $\link$ to $\link'$ 
    induces an exact Lagrangian cobordism from $\link$ to $\link'$, though the 
    construction is more complicated than simply taking the trace of the 
    isotopy \cite{BouSabTra15:LagCobGF, 
      EkhHonKal16:LagCob,EliGro98:LagrIntThy}.
  \item[Legendrian ambient surgery] We describe this construction in more 
    detail in \fullref{ssec:rizell}, and we will develop a method for keeping 
    track of a set of ambient surgeries in \fullref{ssec:graph}.
\end{description}

\subsection{Legendrian ambient surgery}
\label{ssec:rizell}

Our next step is to explain Dimitroglou Rizell's Legendrian ambient surgery 
construction in the $3$-dimensional setting \cite{Dim16:LegAmbSurg}.  Similar 
constructions appear in \cite{BouSabTra15:LagCobGF} and 
\cite{EkhHonKal16:LagCob}, though Dimitroglou Rizell's more flexible language 
is best suited for our purposes.  In dimension $3$, Legendrian ambient surgery 
begins with the data of an oriented Legendrian link $\link \subset (Y, \xi)$ 
and an embedded Legendrian curve $\disk$ with endpoints on $\link$ that is, in 
a sense to be defined, compatible with the orientation of $\link$.  The 
construction then produces a Legendrian $\link_\disk$, contained in an 
arbitrarily small neighborhood of $\link \cup \disk$, that is obtained from 
$\link$ by ambient surgery along $\disk$.  Further, the construction produces 
an exact Lagrangian cobordism from $\link$ to $\link_\disk$.  

More precisely, given $\leg \subset (Y, \xi)$ with contact $1$-form $\alpha$, a 
\emph{surgery disk} is an embedded Legendrian arc $\disk \subset Y$ such that
\begin{enumerate}
  \item $\disk \cap \link = \partial \disk$,
  \item The intersection $\disk \cap \link$ is transverse, and
  \item The vector field $H \subset T_p \leg$ defined for all $p \in \bdy 
    \disk$ (up to scaling) by $d \alpha (G, H (p)) > 0$ for all 
    outward-pointing vectors $G$ in $T_p \disk$ either completely agrees with 
    or completely disagrees with the framing on $\partial \disk$ induced by the 
    orientation of $\leg$.
\end{enumerate}

For an unoriented surgery, we need not specify a framing for $\partial \disk$, 
and the last condition is no longer relevant.

The standard model for such a surgery disk appears in 
\fullref{fig:standard-disk}~(a).  In fact, up to an overall orientation 
reversal on $\link$, there is a neighborhood $U$ of $\disk$ in $Y$ that is 
contactomorphic to a neighborhood of the standard model for $\leg_0$ and 
$\diskstd$ \cite[Section~4.4.1]{Dim16:LegAmbSurg}.  Working in the standard 
model, we may replace $\linkstd$ by the Legendrian arcs $\link_1$ as in 
\fullref{fig:standard-surgery}, a process that realizes the ambient surgery on 
$\linkstd$ along $\diskstd$.  Pulling this construction back to the 
neighborhood of $\disk$ in $Y$, we call the resulting link \emph{Legendrian 
  ambient surgery} on $\link$ along $\disk$.  

\begin{figure}[htbp]
  \includegraphics{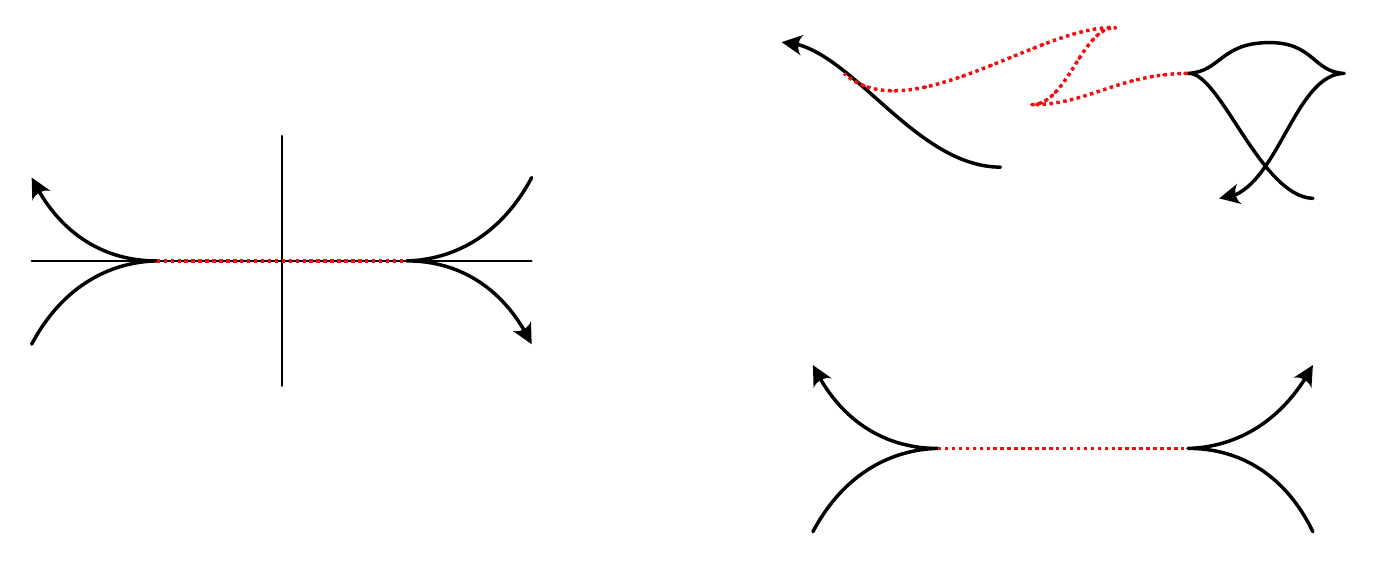}
  \caption{(a) The standard model in $(\R^3, \alphastd)$ of a surgery disk 
    $\diskstd$ with endpoints on a Legendrian $\linkstd$, with (b) another 
    example of a surgery disk, and (c) a disk that fails condition (3).}
  \label{fig:standard-disk}
\end{figure}

\begin{figure}[htbp]
  \includegraphics{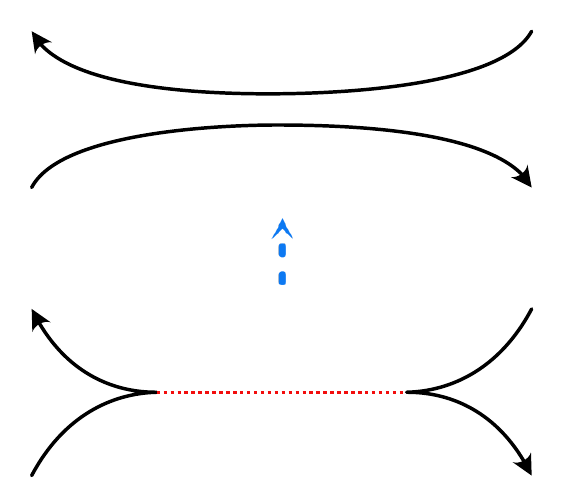}
  \caption{Surgery on the standard model $\link_0 \cup D$ yields a new 
    Legendrian $\link_1$.}
  \label{fig:standard-surgery}
\end{figure}

\begin{theorem}[Dimitroglou Rizell \cite{Dim16:LegAmbSurg}]
  \label{thm:rizell}
  Given an oriented Legendrian link $\link$ and a surgery disk $\disk$, let 
  $\link_\disk$ be the Legendrian link obtained from $\link$ by Legendrian 
  ambient surgery along $\disk$.  Then there exists an exact Lagrangian 
  cobordism from $\link$ to $\link_\disk$ arising from the attachment of a 
  $1$-handle to $(-\infty,T] \times \link$.
\end{theorem}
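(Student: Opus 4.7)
The plan is to reduce the problem to an explicit local model in the symplectization of a standard contact neighborhood, where I can write down the Lagrangian $1$-handle by hand, and then glue this local construction into the trivial cylinder over $\link$ away from the surgery disk. The reduction is essentially for free: the standard neighborhood theorem cited in the excerpt (from \cite{Dim16:LegAmbSurg}) gives a contactomorphism between a neighborhood $U$ of $\disk$ in $Y$ and a neighborhood of $(\linkstd, \diskstd)$ in $(\R^3, \alphastd)$, so I only need to construct the cobordism inside $(-\infty, T] \times U$ for some fixed $T$. Outside of $U$, I take $\cob$ to be the trivial Lagrangian cylinder $(-\infty, T] \times (\link \setminus U)$, and the problem becomes local.

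Inside the standard model I would use the family-of-curves approach. Pick a smooth one-parameter family $\leg_s$ of (possibly singular) Legendrian arcs interpolating between $\linkstd$ at $s = 0$ and $\link_1$ at $s = 1$, passing through a single saddle singularity that sits over the interior of $\diskstd$. The trace $\cob_0 = \{(s, p) : p \in \leg_s\}$ is a smooth surface topologically obtained by attaching a $1$-handle, but it is not yet Lagrangian in $(\R \times \R^3, d(e^t \alphastd))$. I would then choose a reparametrization $t = t(s)$ and a small normal perturbation so that the Liouville form $e^t \alphastd$ pulled back to $\cob_0$ becomes exact. Concretely, a Legendrian family automatically kills $\alphastd$ pointwise, so the obstruction lies in the $e^t \, ds$-term; this can be absorbed by adjusting the speed $t(s)$ and tilting the surface in the Reeb direction, which is precisely the construction used in the closely related setups of \cite{BouSabTra15:LagCobGF, EkhHonKal16:LagCob}. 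Condition (3) of the definition of a surgery disk is what makes the saddle's orientation compatible with that of $\link$, so the resulting handle is orientable and the family $\leg_s$ can be chosen to preserve orientation.

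The main obstacle will be the final bullet point of \fullref{def:cobordism}: ensuring that the primitive of $(e^t \alphastd) \vert_\cob$ is \emph{constant}, not merely locally constant, at each cylindrical end. At the negative end this comes for free by choosing the primitive to vanish on $(-\infty, T_-] \times \link$, but at the positive end the two boundary arcs of $\disk$ produce two cylindrical rays of $\link_\disk$ whose primitive values may a priori differ by the integral of $e^t \alphastd$ along a loop that traverses the handle. I would control this by choosing the handle so that this loop bounds an exact region: since $\diskstd$ is itself Legendrian, the integral of $\alphastd$ along $\diskstd$ vanishes, and with a careful choice of the parameter $t(s)$ the net contribution of $e^t\,dt$ also vanishes. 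This forces the primitive to agree across the handle, yielding exactness with the required constant values.

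Once the local Lagrangian handle is constructed with these properties, gluing to the trivial cylinder $(-\infty, T] \times (\link \setminus U)$ along the collar where the two pieces already agree produces the desired exact Lagrangian cobordism from $\link$ to $\link_\disk$, realized as a $1$-handle attached to $(-\infty, T] \times \link$.
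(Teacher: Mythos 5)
The paper does not prove \fullref{thm:rizell} at all: it is imported verbatim from Dimitroglou Rizell \cite{Dim16:LegAmbSurg}, so the only meaningful comparison is with that cited construction. Your overall architecture does match it in outline: use the standard-neighborhood identification of $(\link, \disk)$ near $\disk$ with the model $(\linkstd, \diskstd)$, build the Lagrangian $1$-handle inside the symplectization of that neighborhood, and glue to the cylinder over $\link$ away from $U$ (note, though, that outside $U$ the cobordism must be cylindrical for \emph{all} $t$, i.e.\ $\R \times (\link \setminus U)$, not just $(-\infty, T] \times (\link \setminus U)$). Condition (3) on the surgery disk is what licenses that identification up to overall orientation reversal, not merely the orientability of the handle.

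The genuine gap is in the local step, which is the entire content of the theorem. You propose to take the trace of a one-parameter family of Legendrians passing through a saddle and then make it Lagrangian by reparametrizing $t = t(s)$ and ``tilting in the Reeb direction.'' This is not automatic: for a family $\varphi_s$ the pullback of $e^t \alpha$ to the trace is $e^{t(s)} \, \alpha(\partial_s \varphi_s) \, ds$, and the Lagrangian condition forces $\alpha(\partial_s \varphi_s)$ to be constant along each slice, a condition a generic family fails; this is precisely why the paper itself warns (in Section 2.1) that the cobordism induced even by a genuine Legendrian isotopy is ``more complicated than simply taking the trace.'' At the singular parameter one must additionally verify that the resulting surface is smooth and embedded, and exactness with a single constant value of the primitive on each end is checked from an explicit primitive on the model handle, not from the vanishing of $\int_{\diskstd} \alphastd$ (which only controls the contact form along the disk, not the Liouville form along a path in the cobordism). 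In \cite{Dim16:LegAmbSurg} (and in the closely related elementary-cobordism constructions of \cite{EkhHonKal16:LagCob, BouSabTra15:LagCobGF}) the saddle is produced by an explicit coordinate model or a generating-family argument rather than by perturbing a trace; your proposal asserts the existence of the needed perturbation without an argument, so the central step remains unproved. For the purposes of this paper the correct move is simply to cite \cite{Dim16:LegAmbSurg}; if you want a self-contained proof, you would need to write down the local Lagrangian handle explicitly and verify the Lagrangian, embeddedness, and exactness conditions of \fullref{def:cobordism} directly.
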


\begin{remark}
  \label{rmk:local-surgery}
  The construction of Legendrian ambient surgery and the associated Lagrangian 
  cobordism is local.  In particular, for a small neighborhood $U$ of $\disk$, 
  the surgery construction does not alter $\leg \cap (Y \setminus U)$, and the 
  cobordism $\cob$ outside of $\R \times U$ is cylindrical over $\leg \cap (Y 
  \setminus U)$.
\end{remark}

\subsection{Legendrian handle graphs}
\label{ssec:graph}

In this section, we introduce a structure for keeping track of independent 
ambient surgeries.  We use the notion of a Legendrian graph, following the 
conventions in \cite{ODoPav12:LegGraphs}.

Before we begin, recall from e.g.\ \cite{ODoPav12:LegGraphs} that two 
Legendrian graphs in $(\R^3, \xistd)$ are Legendrian isotopic if and only if 
their front diagrams are related by planar isotopy and six Reidemeister moves, 
as seen in \fullref{fig:reidemeister}.

\begin{figure}[htbp]
  \labellist
  \tiny\hair 2pt
  \pinlabel {I} [ ] at 154 295
  \pinlabel {II} [ ] at 154 182
  \pinlabel {III} [ ] at 154 67
  \pinlabel {IV} [ ] at 522 313
  \pinlabel {IV} [ ] at 655 313
  \pinlabel {V} [ ] at 582 182
  \pinlabel {VI} [ ] at 582 67
  \endlabellist
  \includegraphics[width=5in]{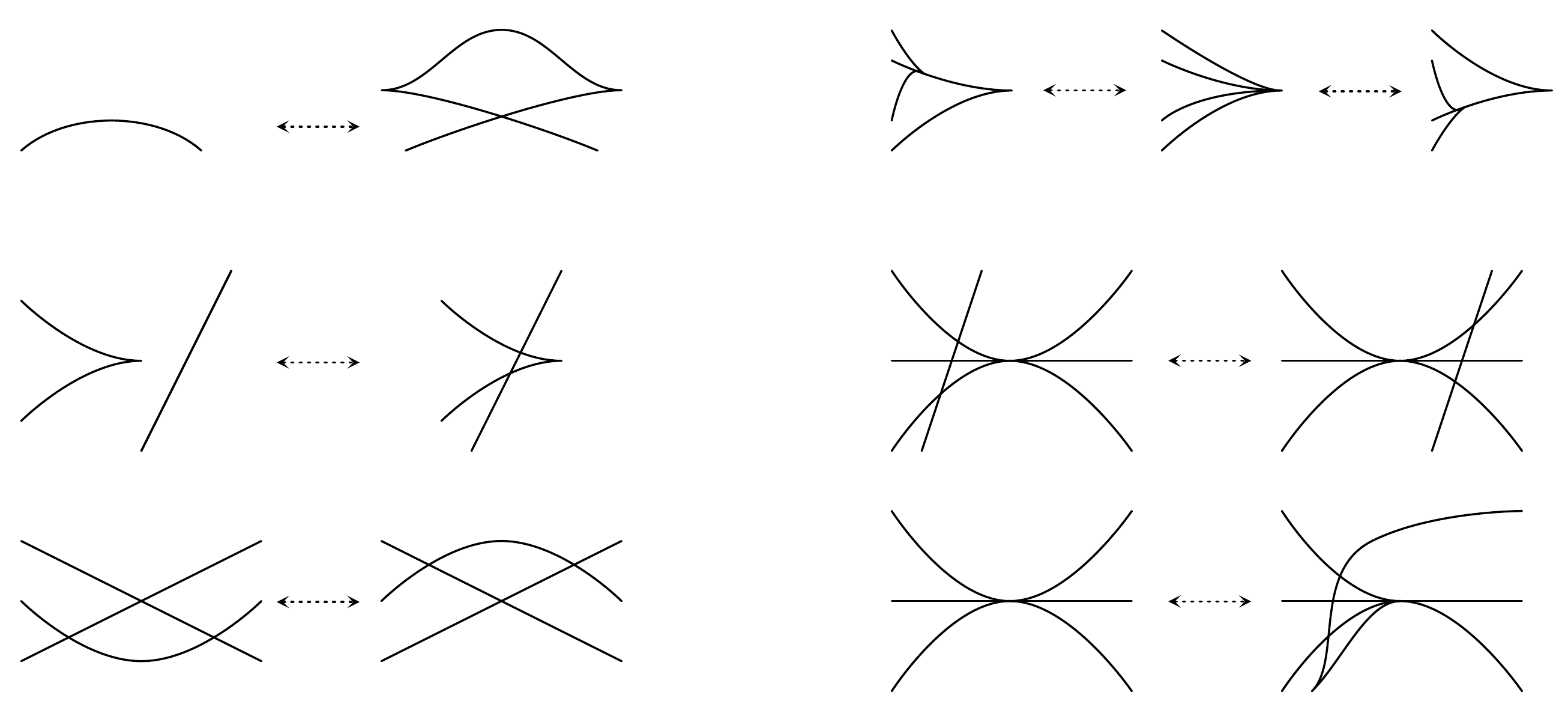}
  \caption{Reidemeister moves for Legendrian graphs in $\R^3$.}
  \label{fig:reidemeister}
\end{figure}

\begin{definition}
  \label{def:graph}
  A \emph{Legendrian handle graph} is a pair $(\graph, \leg)$, where $\graph 
  \subset (Y, \xi)$ is a trivalent Legendrian graph and $\leg \subset (Y, \xi)$ 
  is a Legendrian link (called the \emph{underlying link}), such that
  \begin{itemize}
    \item $\leg \subset \graph$;
    \item The vertices of $\graph$ lie on $\leg$; and
    \item $\graph \setminus \leg$ is the union of a finite collection of 
      pairwise disjoint Legendrian arcs $\arc_1, \dotsc, \arc_m$ whose closures 
      satisfy the conditions of surgery disks for $\leg$.
  \end{itemize}
  We also say that $\graph$ is a \emph{Legendrian handle graph on $\leg$}. The 
  set of closures of the components of $\graph \setminus \leg$ is denoted by 
  $\handles$.
\end{definition}

See the bottom of \fullref{fig:handle-graph} for an example of a Legendrian 
handle graph whose underlying Legendrian link is a Legendrian Hopf link in 
$(\R^3, \xistd)$.

\begin{figure}[htbp]
  \includegraphics{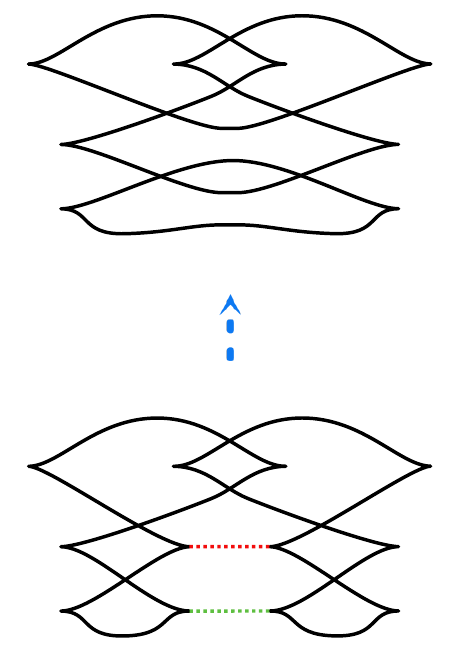}
  \caption{The Legendrian link at the top of the figure is the Legendrian 
    ambient surgery on the Legendrian handle graph $(\graph, \leg)$ at the 
    bottom.}
  \label{fig:handle-graph}
\end{figure}

\begin{definition}
  \label{def:graph-surg}
  Let $(\graph, \leg)$ be a Legendrian handle graph and let $\handles_0$ be a 
  subset of the arcs in $\handles$. The \emph{Legendrian ambient surgery} 
  $\Surg (\graph, \leg, \handles_0)$ is the Legendrian handle graph $(\graph', 
  \leg')$ resulting from performing Legendrian ambient surgery along each arc 
  in $\handles_0$, as described in \fullref{ssec:rizell}.
\end{definition}

We will, at times, abuse notation and refer to the underlying Legendrian link 
$\leg'$ by $\Surg (\graph, \leg, \handles_0)$; we will also use $\Surg 
(\graph,\leg)$ when $\handles_0 = \handles$. For example, in 
\fullref{fig:handle-graph}, the Legendrian link at the top is $\Surg (\graph, 
\leg)$ for the Legendrian handle graph $(\graph, \leg)$ at the bottom.

By the work of Dimitroglou Rizell \cite{Dim16:LegAmbSurg} as described in 
\fullref{ssec:rizell}, Legendrian ambient surgery on any given Legendrian arc 
corresponds to an exact Lagrangian  cobordism; this implies that, given an 
order $\order = (\arc_{j_1}, \dotsc, \arc_{j_m})$ of the components of 
$\handles_0$, one obtains an exact Lagrangian cobordism $\cob (\graph, \leg, 
\handles_0, \order)$ from $\leg$ to $\Surg (\graph, \leg, \handles_0)$ by 
performing Legendrian ambient surgery in the order given by $\order$.  The 
order, in fact, does not matter.

\begin{proposition}
  \label{prop:graph-surg-order}
  Suppose $(\graph, \leg)$ is a Legendrian handle graph, and $\order_1$ and 
  $\order_2$ are orders of the components of $\handles_0$. The Lagrangian 
  cobordisms $\cob (\graph, \leg, \handles_0, \order_1)$ and $\cob (\graph, 
  \leg, \handles_0, \order_2)$ are exact-Lagrangian isotopic.
\end{proposition}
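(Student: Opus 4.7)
The plan is to reduce to the case in which $\order_2$ is obtained from $\order_1$ by swapping two adjacent entries, and then exploit the locality of Dimitroglou Rizell's construction (\fullref{rmk:local-surgery}) to slide the two swapped handles past each other in the $\R$-direction of the symplectization.

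First I would observe that any two orderings of the finite tuple $\handles_0$ are related by a finite sequence of adjacent transpositions and that exact-Lagrangian isotopy is a transitive relation on cobordisms with fixed endpoints. So it suffices to prove the statement when $\order_2$ is obtained from $\order_1 = (\arc_{j_1}, \dotsc, \arc_{j_m})$ by swapping $\arc_{j_k}$ and $\arc_{j_{k+1}}$ for some $k$. By \fullref{def:graph} the arcs $\arc_{j_k}$ and $\arc_{j_{k+1}}$ are disjoint in $Y$, so I may choose disjoint open neighborhoods $U_k, U_{k+1} \subset Y$ of their closures. By \fullref{rmk:local-surgery}, the $1$-handle arising from surgery along $\arc_{j_k}$ is supported in $\R \cross U_k$ and the cobordism is cylindrical over the underlying link outside this region; analogously for $\arc_{j_{k+1}}$ and $\R \cross U_{k+1}$. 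In particular, the two swapped handles occupy disjoint subsets of $\R \cross Y$.

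Next I would construct the isotopy by continuously varying the heights at which these two handles are attached. Let $T_k < T_{k+1}$ denote the attaching heights in $\cob (\graph, \leg, \handles_0, \order_1)$, and define $\cob_s$ for $s \in [0, 1]$ to be the submanifold obtained from this cobordism by reattaching the handle in $\R \cross U_k$ at height $(1-s) T_k + s T_{k+1}$ and the handle in $\R \cross U_{k+1}$ at height $(1-s) T_{k+1} + s T_k$, with the intervening cylindrical collars adjusted accordingly and all other handles left fixed. Because the two moving handles live in the disjoint subsets $\R \cross U_k$ and $\R \cross U_{k+1}$, the family $\cob_s$ is embedded for every $s$. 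Because $\R$-translation is a conformal symplectomorphism of $(\R \cross Y, d (e^t \alpha))$ that preserves the exactness of Lagrangians (rescaling any primitive by a constant factor), each $\cob_s$ is an exact Lagrangian cobordism from $\leg$ to $\Surg (\graph, \leg, \handles_0)$, with $\cob_0 = \cob (\graph, \leg, \handles_0, \order_1)$ and $\cob_1 = \cob (\graph, \leg, \handles_0, \order_2)$.

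The main technical point I expect to confront is verifying smoothness of $\cob_s$ across the junctions between the handle regions and the cylindrical collars, and ensuring that the primitive on $\cob_s$ varies smoothly in $s$. I would handle both concerns by realizing the variation of attaching heights through a Hamiltonian isotopy compactly supported in $\R \cross (U_k \union U_{k+1})$, combined with the standard smoothing used to concatenate elementary Lagrangian cobordisms as in \fullref{def:cobordism}.
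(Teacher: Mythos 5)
Your proof is correct and follows essentially the same route as the paper: reduce to an adjacent transposition, use the pairwise disjointness of the handle arcs together with the locality of Dimitroglou Rizell's construction (\fullref{rmk:local-surgery}) to confine the two relevant handles to disjoint regions of $\R \times Y$, and then interpolate the attaching heights to produce the exact Lagrangian isotopy. Your final paragraph spells out the smoothing and exactness bookkeeping a bit more explicitly than the paper does, but this is a fill-in of expected detail rather than a different argument.
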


\begin{proof}
  It suffices to consider the case where $\order_1$ and $\order_2$ differ by an 
  adjacent transposition $(\arc_{j_1}, \arc_{j_2}) \to (\arc_{j_2}, 
  \arc_{j_1})$.  The cobordism $\cob (\graph, \leg, \handles_0, \order)$ is 
  defined by composing the elementary Lagrangian cobordisms associated to the 
  arcs $\arc_1, \dotsc, \arc_m$. Since there are finitely many of these, by 
  shrinking the neighborhoods of $\arc_j$ as in \fullref{rmk:local-surgery}, we 
  may assume the neighborhoods to be pairwise disjoint. This implies that the 
  elementary Lagrangian cobordisms associated to $\arc_{j_1}$ and $\arc_{j_2}$ 
  may be constructed simultaneously and shifted past each other along the 
  cylindrical parts of the cobordism.  Thus, the parameter given by the 
  relative heights of these two cobordisms gives an exact Lagrangian isotopy 
  between $\cob (\graph, \leg, \handles_0, \order_1)$ and $\cob (\graph, \leg, 
  \handles_0, \order_2)$.  \end{proof}

\fullref{prop:graph-surg-order} allows us to associate an \emph{isotopy class} 
$\cob (\graph, \leg, \handles_0)$ of exact Lagrangian cobordisms to a 
Legendrian handle graph $(\graph, \leg)$ and $\handles_0 \subset \handles$. 

\begin{remark}
  \label{rmk:handle-graph-limitation}
  It is not clear that every decomposable cobordism can be described using a 
  Legendrian handle graph.  As shown in \fullref{fig:ordered-attachment}, one 
  may need to perform one ambient surgery in order for another to be possible; 
  this would violate \fullref{prop:graph-surg-order}.
\end{remark}

\begin{figure}[htbp]
  \includegraphics[width=2in]{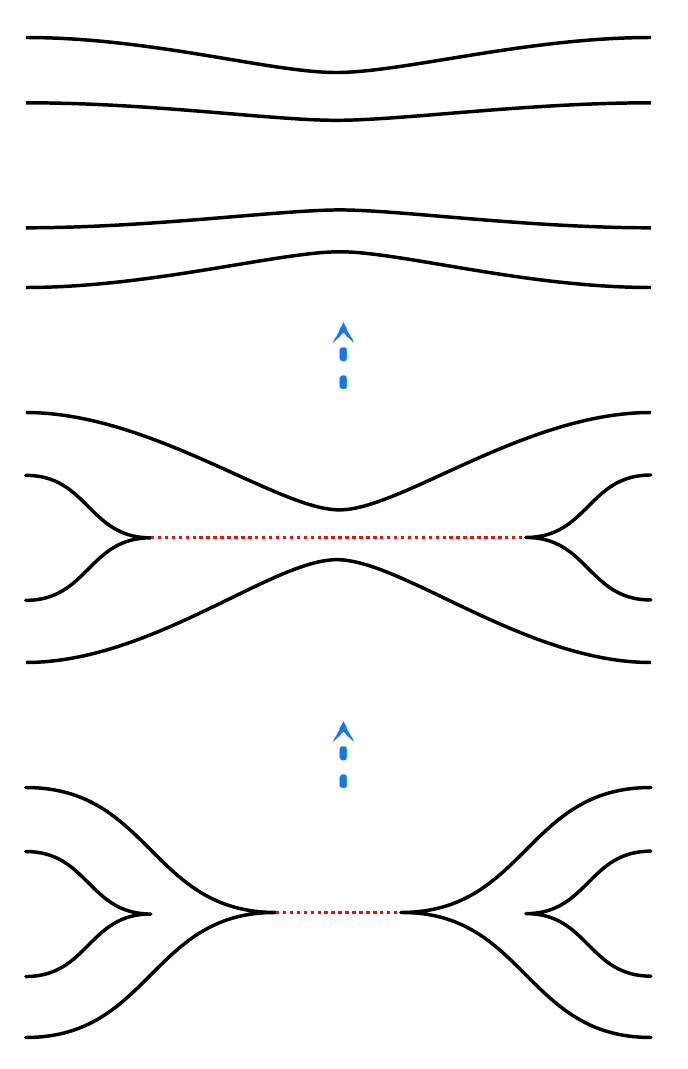}
  \caption{The surgery joining the two inner cusps cannot be performed until 
    after the surgery joining the two outer cusps.}
  \label{fig:ordered-attachment}
\end{figure}

\section{Lower bounds via contact topology}
\label{sec:min}

In this section, for a pair of Legendrian links with the same rotation number, 
we construct a pair of exact Lagrangian cobordisms from a common lower bound, 
encoded by Legendrian handle graphs with the same underlying link.

\begin{proposition}
  \label{prop:min_general}
  Let $\linkone$ and $\linktwo$ be oriented Legendrian links in a tight contact 
  manifold $(Y, \xi)$ and suppose that there exist Seifert surfaces $\seifone$ 
  and $\seiftwo$ for which $\rot_{[\seifone]} (\linkone) = \rot_{[\seiftwo]} 
  (\linktwo)$. Then there exists an oriented Legendrian link $\linkm \subset 
  (Y, \xi)$ and handle graphs $\graphone$ and $\graphtwo$ on $\leg_-$ such that 
  $\Surg(\graphone, \linkm)$ (resp.\ $\Surg (\graphtwo, \linkm)$) is Legendrian 
  isotopic to $\linkone$ (resp.\ $\linktwo$).
\end{proposition}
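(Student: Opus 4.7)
The plan is to construct $\linkm_1$ and $\linkm_2$ independently from the Seifert surfaces $\seifone, \seiftwo$, and then to identify them as a common Legendrian link via the classification of Legendrian unknots in tight contact $3$-manifolds.

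For each $i = 1, 2$, I would first perturb $\seif_i$ to be a convex surface with Legendrian boundary $\link_i$, then choose a cut system of properly embedded arcs $\arc_1^i, \dotsc, \arc_{k_i}^i$ that cuts $\seif_i$ into a single disk $\disk_i$. Using Honda's Legendrian realization principle, together with bypass moves arranging the arcs into nonisolating position transverse to the dividing set, I would isotope the surface so that each cut arc lies in the characteristic foliation and is Legendrian. Setting $\linkm_i = \partial \disk_i$, tightness of $(Y, \xi)$ implies that $\linkm_i$ is a Legendrian unknot. A local analysis of the convex structure near the endpoints of the arcs then shows, after a small pushoff, that each Legendrian arc satisfies the framing condition~(3) for surgery disks, so together with $\linkm_i$ they form a Legendrian handle graph $\graph_i$; by \fullref{thm:rizell}, $\Surg(\graph_i, \linkm_i)$ is Legendrian isotopic to $\link_i$.

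Next, I would match the classical invariants of $\linkm_1$ and $\linkm_2$. Chantraine's formula, applied to the exact cobordism associated with each handle attachment, preserves rotation numbers, so $\rot(\linkm_i) = \rot_{[\seif_i]}(\link_i)$, and these agree by hypothesis. The Thurston--Bennequin numbers satisfy $\tb(\link_i) - \tb(\linkm_i) = k_i$. To equalize them, I would boundary-connect-sum $\seif_i$ with a trivial torus in a Darboux ball disjoint from $\link_i$, which introduces two new cut arcs and decreases $\tb(\linkm_i)$ by $2$; the parities match because $\tb + \rot$ has a fixed parity mod $2$ for null-homologous Legendrians in tight $(Y, \xi)$ and the rotations coincide.

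Once the Thurston--Bennequin and rotation numbers of $\linkm_1$ and $\linkm_2$ agree, the Eliashberg--Fraser classification supplies a Legendrian isotopy between them; transporting $\graphtwo$ along this isotopy yields both handle graphs on the common underlying link $\linkm = \linkm_1$. The hardest step is verifying the framing condition~(3) in the construction of $\graph_i$: Legendrian realization produces Legendrian arcs, but the surgery-disk framing must be checked independently, requiring a careful local analysis relating the characteristic foliation on $\seif_i$ to the orientation of $\link_i$ at each boundary point.
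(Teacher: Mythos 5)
Your overall route is the same as the paper's (convexify the Seifert surface, Legendrian-realize a curve system in it, land on a Legendrian unknot bounding the cut-open disk, match classical invariants by parity, and finish with Eliashberg--Fraser), but there are two genuine gaps. First, you cannot in general ``perturb $\seif_i$ to be a convex surface with Legendrian boundary $\link_i$'': by Kanda's criterion this requires the twisting of $\xi$ relative to the surface framing to be nonpositive along every boundary component, and the argument really needs $\tw(\xi,\seif_i)<0$ so that the dividing set meets each component of $\link_i$ and the nonisolating hypothesis of the Legendrian Realization Principle can be arranged. For the maximal right-handed trefoil ($\tb=1$) your very first step already fails. The paper deals with this by first double-stabilizing components via \fullref{lem:doub_stab} and recording the stabilizations as handle arcs, so that surgery still recovers $\link_i$; moreover, even after convexity is achieved, realizing your curves by LeRP forces finger moves across dividing curves, so the realized curve is an iterated double stabilization of the intended one, and those corrections must again be absorbed into extra handle arcs. ``Bypass moves'' do not obviously supply this, and you never track the discrepancy.

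Second, the handle arcs of $\graph_i$ cannot be the cut arcs themselves. The cut arcs have endpoints on $\link_i=\bdy\seif_i$, not on your unknot $\linkm_i$, so $\linkm_i$ together with the (Legendrian-realized) cut arcs is not a handle graph on $\linkm_i$ in the sense of \fullref{def:graph}, and ``$\Surg(\graph_i,\linkm_i)$'' along those arcs is not defined --- and would not produce $\link_i$ if it were. What reconstructs $\link_i$ from the unknot is ambient surgery along arcs \emph{dual} to the cut system (co-cores crossing each cut once, with endpoints on $\linkm_i$); these are exactly the dual surgery arcs $b_i$ in the paper's proof of \fullref{lem:unknot_cob}. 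Relatedly, $\bdy\disk_i$ as literally defined traverses each cut arc twice, so the unknot must be taken to be a Legendrian-realized surgered push-off of $\link_i$ inside the surface, not the raw boundary of the cut-open disk. By contrast, your worry about the surgery-disk framing condition is comparatively minor, your $\tb$-equalization by adding trivial genus to the Seifert surface is a workable substitute for the paper's repeated use of \fullref{lem:doub_stab} on the unknot with larger $\tb$ (the parity observation is the same), and the Eliashberg--Fraser endgame coincides with the paper's.
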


Our proof of \fullref{prop:min_general} relies on convex surface theory applied 
to the Seifert surfaces $\seifone$ and $\seiftwo$. In preparation for this, we 
begin by establishing the following lemma, which extends the work of Boranda, 
Traynor, and Yan \cite{BorTraYan13:MinLeg} by placing their result in the 
context of Legendrian handle graphs.

\begin{lemma}[cf.\ {\cite[Lemma~3.3]{BorTraYan13:MinLeg}}]
  \label{lem:doub_stab}
  Let $\link$ be an oriented Legendrian link in a tight contact manifold $(Y, 
  \xi)$, and $\pstab \comp \nstab(\link)$ the result of successive negative and 
  positive stabilization on a component of $\link$. Then there is a handle 
  graph $\graph$ on $\pstab \comp \nstab (\link)$ such that $\Surg(\graph, 
  \pstab \comp \nstab (\link))$ is Legendrian isotopic to $\link$.
\end{lemma}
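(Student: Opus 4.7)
The plan is to reduce the lemma to a local computation in a Darboux neighborhood $U$ of the point on $\link$ where the two stabilizations are performed, chosen so that $U$ is contactomorphic to a small ball in $(\R^3, \alphastd)$ and $\link \cap U$ is a single straight Legendrian arc. Since stabilization is a local operation, $\pstab \comp \nstab (\link)$ agrees with $\link$ outside $U$, and inside $U$ its front appears as a short zigzag with one pair of cusps arising from $\nstab$ adjacent to one pair from $\pstab$. I would then write down an explicit Legendrian arc $\arc \subset U$ connecting two branches of this zigzag, check that $\arc$ qualifies as a surgery disk in the sense of \fullref{ssec:rizell}, and verify that the resulting ambient surgery returns the local picture to a straight Legendrian arc.

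The surgery arc $\arc$ is constructed by connecting two strands of the zigzag through the region separating the $\nstab$ and $\pstab$ cusp pairs, essentially repeating the standard model pictured in \fullref{fig:standard-disk}. Conditions (1) and (2) of a surgery disk are immediate from this explicit description. The main obstacle, and the essential reason that the lemma requires \emph{one stabilization of each sign}, is verifying condition (3): the framing vector field $H$ characterized by $d \alpha (G, H) > 0$ at each point of $\bdy \arc$ must either uniformly agree or uniformly disagree with the orientation-induced framing of $\link$. On the two branches of $\pstab \comp \nstab (\link)$ meeting $\bdy \arc$, the orientations are parallel in opposite directions precisely because one stabilization is positive and the other is negative; this sign flip forces the two framings at $\bdy \arc$ to be consistent with one another, as required. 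A direct computation in the standard model confirms this, and simultaneously shows that two stabilizations of the same sign would fail the condition, consistent with the topological fact that no such double stabilization can be undone by a single handle attachment.

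Once $\arc$ is confirmed to be a surgery disk, let $\graph$ denote the trivalent Legendrian graph $\pstab \comp \nstab (\link) \cup \arc$, so that $(\graph, \pstab \comp \nstab (\link))$ is a Legendrian handle graph in the sense of \fullref{def:graph}. By \fullref{rmk:local-surgery}, the ambient surgery $\Surg (\graph, \pstab \comp \nstab (\link))$ is supported in an arbitrarily small neighborhood of $\arc$ inside $U$, and in that neighborhood it cancels the zigzag and restores the straight arc. Since the resulting link and $\link$ agree outside $U$ and differ inside $U$ only up to a Legendrian isotopy supported in $U$, we conclude that they are Legendrian isotopic, proving the lemma.
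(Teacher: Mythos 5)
Your overall strategy---reduce to a local Darboux model at the stabilization locus, write down an explicit local handle graph, and invoke \fullref{rmk:local-surgery} to globalize---matches the spirit of the paper's proof, which is simply to exhibit the local model in \fullref{fig:doub_stab}. There is, however, a concrete gap in the execution: a single surgery arc cannot suffice.

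Each Legendrian ambient surgery attaches a Lagrangian $1$-handle, so the associated elementary cobordism has Euler characteristic $-1$ and raises $\tb$ by exactly $1$ (via Chantraine's formula $\tb(\legp) - \tb(\legm) = -\chi(\cob)$). Since $\tb\bigl(\pstab\comp\nstab(\link)\bigr) = \tb(\link) - 2$, any handle graph $\graph$ on $\pstab\comp\nstab(\link)$ with $\Surg\bigl(\graph, \pstab\comp\nstab(\link)\bigr)$ Legendrian isotopic to $\link$ must contain exactly \emph{two} handle arcs. Geometrically this is forced as well: an orientation-compatible band attached to a single component always splits it into two, so the single ambient surgery you describe on the local zigzag does not produce a straight strand but a straight strand together with a small split-off unknot, and a second handle is then needed to absorb that unknot. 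The local model in \fullref{fig:doub_stab} encodes exactly these two handles, which is also why the paper later refers to the (plural) ``handle arcs'' produced by \fullref{lem:doub_stab}. Your graph $\graph = \pstab\comp\nstab(\link) \cup \arc$ is therefore short one arc, and the claim that ``the ambient surgery returns the local picture to a straight Legendrian arc'' does not hold.

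A secondary point: the necessity of one positive and one negative stabilization is most cleanly seen from the invariance of the rotation number under exact Lagrangian cobordism---$\rot\bigl(\pstab^2(\link)\bigr) = \rot(\link) + 2$, so no handle graph of any size could make $\Surg$ of $\pstab^2(\link)$ isotopic to $\link$---rather than from a local failure of condition~(3), which is an orientability condition on an individual arc and does not by itself see the rotation count.
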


\begin{proof}
  The proof is essentially contained in \fullref{fig:doub_stab}, which 
  explicitly identifies a local model for the desired handle graph. \qedhere

  \begin{figure}[htbp]
    \labellist
    \small\hair 2pt
    \pinlabel {$\leg_-$} [ ] at 13 27
    \pinlabel {$\leg_+$} [ ] at 442 133
    \endlabellist
    \includegraphics[width=5in]{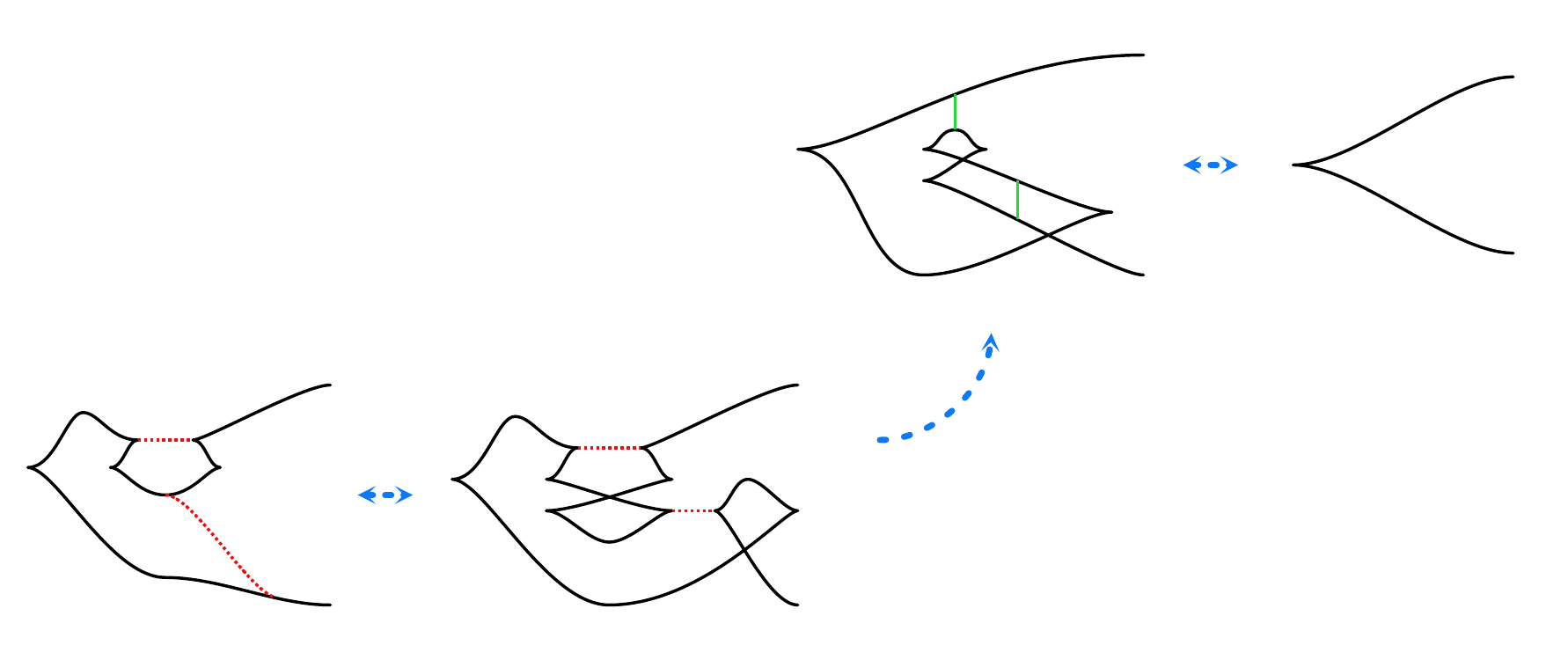}
    \caption{A handle graph giving rise to a Lagrangian cobordism from $S_+ 
      \circ S_- (\link)$ to $\link$.}
    \label{fig:doub_stab}
  \end{figure}
\end{proof}

\begin{lemma}
  \label{lem:unknot_cob}
  Let $\link$ be oriented, null-homologous Legendrian link in a tight contact 
  manifold $(Y, \xi)$. Then there exists an oriented Legendrian unknot $\unknot
  \subset (Y, \xi)$ and a handle graph $\graph$ on $\unknot$ such that 
  $\Surg(\graph, \unknot)$ is Legendrian isotopic to $\link$.
\end{lemma}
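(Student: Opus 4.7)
The plan is to construct $\unknot$ as the boundary of a disk obtained by cutting a Seifert surface for $\link$ along a system of arcs, and then to reinterpret those cutting arcs (in reverse) as the handles of a graph $\graph$ on $\unknot$ whose surgery produces $\link$. Since $\link$ is null-homologous, choose a Seifert surface $\seif \subset Y$ with $\bdy \seif = \link$. Pick a collection of properly embedded, pairwise disjoint arcs $\arc_1, \dotsc, \arc_N$ on $\seif$ with endpoints on $\link$ that cut $\seif$ into a single disk $\diskstd$; one can take $N = 2 g (\seif) + \abs{\link} - 1$, where $\abs{\link}$ denotes the number of components.

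Next, I would apply convex surface theory to $\seif$. After possibly performing negative stabilizations $\nstab$ on components of $\link$---each of which can eventually be undone by attaching an extra handle supplied by \fullref{lem:doub_stab}---the surface $\seif$ can be $C^0$-perturbed to be convex with Legendrian boundary, the standard obstruction being that the twisting of the contact framing relative to the Seifert framing must be sufficiently negative along each component. By Giroux flexibility together with the Legendrian realization principle, I would then isotope so that the arcs $\arc_i$ are simultaneously Legendrian, pairwise disjoint, and meet $\link$ transversally. Because $\seif$ is now convex, the surface framing along each $\arc_i$ is identified with the contact framing (up to sign) at the endpoints on $\link$, which is exactly what condition (3) of the surgery disk definition in \fullref{ssec:rizell} requires---possibly after further stabilizations of $\link$ to align signs, again undoable via \fullref{lem:doub_stab}.

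With these arcs established as surgery disks, $\unknot := \bdy \diskstd$ is a Legendrian curve bounding the convex embedded disk $\diskstd$ inside the tight manifold $(Y, \xi)$; by Eliashberg's theorem on Legendrians bounding disks in tight contact $3$-manifolds, $\unknot$ is a Legendrian unknot. Viewing the arcs $\arc_1, \dotsc, \arc_N$ as surgery arcs with endpoints on $\unknot$ gives a handle graph $\graph_0$ on $\unknot$, and performing Legendrian ambient surgery along all of the $\arc_i$ is topologically the reverse of the cutting, so that $\Surg (\graph_0, \unknot)$ is Legendrian isotopic to the (possibly stabilized) $\link$. Adjoining to $\graph_0$ the handles provided by \fullref{lem:doub_stab} for each stabilization applied in the previous step yields the desired handle graph $\graph$ on $\unknot$ with $\Surg (\graph, \unknot)$ Legendrian isotopic to $\link$.

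The main obstacle is the framing-compatibility step: arranging for every Legendrian arc $\arc_i$ to satisfy surgery disk condition (3) simultaneously requires a delicate interplay of the Legendrian realization principle, control of the dividing set $\divset$ of $\seif$ (especially the parity of $\abs{\arc_i \cap \divset}$ and the sign of the regions of $\seif \setminus \divset$ the arc traverses), and the stabilize-then-cancel trick afforded by \fullref{lem:doub_stab}. The topological picture of cutting $\seif$ to a disk is transparent; the technical heart of the argument lies in promoting that picture to a Legendrian one while respecting the framing conventions on surgery disks.
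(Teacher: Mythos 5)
Your overall strategy is the same as the paper's: cut a Seifert surface for $\link$ down to a disk along an arc system, use convex surface theory and the Legendrian Realization Principle to make everything Legendrian, and absorb any stabilizations via the handles of \fullref{lem:doub_stab}. However, there are two concrete problems. First, the arcs you propose to use as the handles of $\graph$ are the wrong ones. The cutting arcs $\arc_1, \dotsc, \arc_N$ have their endpoints on $\link$, not on the unknot, and $\bdy \diskstd$ is not an embedded curve in $Y$ at all: when the cut-open surface is mapped back into $Y$, its boundary traverses each $\arc_i$ twice. The paper instead takes a parallel push-off of $\link$ in the surface, surgers it along the arc basis to get an embedded curve bounding the complementary disk (hence a topological unknot), and uses as surgery arcs the \emph{dual} arcs $b_i$ (the cores of the bands, with endpoints on the unknot), not the cutting arcs themselves; ambient surgery along the duals is what regrows the bands. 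Your plan as written attaches handles along arcs that do not even meet the unknot.

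Second, the step you defer---``the surface framing along each arc is identified with the contact framing because $\seif$ is convex''---is false as a general statement and is exactly where the work lies. For a Legendrian curve realized on a convex surface, the twisting of $\xi$ relative to the surface is governed by intersections with the dividing set $\divset_{\seif}$, so without controlling those intersections the ambient surgery along the realized dual arcs produces a link that is only a stabilization of $\link$, and the LeRP itself need not apply to the full curve system. The paper's mechanism for this is a preliminary round of finger moves of the boundary push-off across the dividing curves (each an even number of times, hence a double stabilization that is later undone by extra handles from \fullref{lem:doub_stab}), arranged so that the resulting arc basis is disjoint from the dividing set and every complementary region still meets $\divset$; only then is LeRP applied, first to produce the new Legendrian boundary and then to realize the unknot together with the dual arcs. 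Your proposal names this as ``the main obstacle'' but supplies no argument for it, and the justification it does offer (convexity identifying the framings, plus condition (3) of the surgery-disk definition following automatically) does not hold; condition (3) is an orientation condition at the endpoints and must also be checked against the orientation of the surgered link. So the proposal is a correct outline of the paper's approach, but the technical heart---the dividing-set bookkeeping that makes the realized surgery arcs genuinely reproduce $\link$---is missing.
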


\begin{proof}
  Suppose that $\seif$ is a Seifert surface for $\link$. Applying 
  \fullref{lem:doub_stab} to successively double-stabilize each component of 
  $\link$ if necessary, we obtain a handle graph $(\graph_1, \link_1)$ and a 
  Seifert surface $\seif_1$ for $\link_1$ isotopic to $\seif$, such that the 
  twisting of $\xi$ relative to $\seif_1$ along each component of $\bdy \seif_1 
  = \link_1$ is negative, and $\Surg (\graph_1, \link_1)$ is Legendrian 
  isotopic to $\link$.  Below, we will denote this first condition by the 
  shorthand notation $\tw (\xi, \seif_1) < 0$, and similarly for other 
  surfaces.

  By work of Kanda \cite{Kan98:NonExactness}, since $\tw (\xi, \seif_1) < 0$, 
  there is an isotopy of $\seif_1$ relative to $\bdy \seif_1 = \link_1$ such 
  that the resulting surface $\seif_2$ is convex.  (While we will not use this, 
  we may assume that this isotopy is a $C^0$ perturbation near the boundary, 
  followed by a $C^\infty$ perturbation of the interior.) Further, by possibly
  Legendrian-isotoping the handle arcs of $\graph_1$, we obtain a handle graph
  $(\graph_2, \link_2 = \link_1)$ whose handle arcs $\graph_2 \setminus 
  \link_2$ intersect $\seif_2$ transversely in a finite number of points.

  To aid the discussion to follow, we picture the convex Seifert surface 
  $\seif_2$ in disk-band form; see \fullref{fig:ss_band_form}. Below, we shall 
  distinguish the bands from the disks, by fixing the disk-band decomposition.  
  (Note that \fullref{fig:ss_band_form} is an abstract diagram of $\seif_2$; as 
  $\seif_2$ is embedded in $Y$, the bands may be ``linked''.)  Since $\tw (\xi, 
  \seif_2) < 0$, the dividing set must intersect each component of $\link$.  

  \begin{figure}[htbp]
    \includegraphics[scale=1.0]{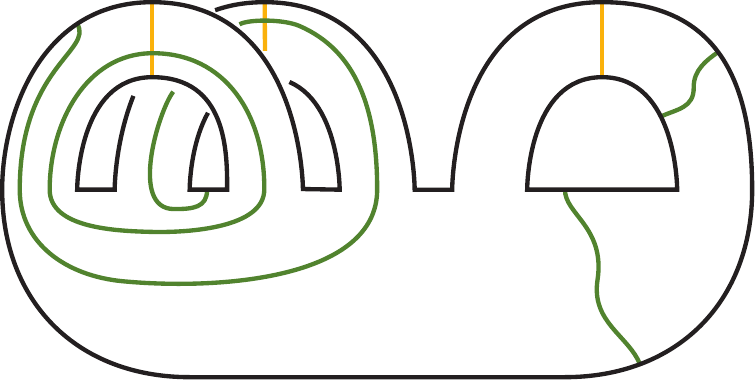}
    \caption{The convex Seifert surface $\seif_2$, dividing set 
      $\Gamma_\Sigma$, and arc basis, viewed in disk-band form.}
    \label{fig:ss_band_form}
  \end{figure}

  To obtain the desired handle graph $(\graph, \unknot)$, our strategy is to 
  cut the bands of $\seif_2$. Precisely, let $\set{a_1, \dotsc, a_g}$ be an arc 
  basis for $\seif_2$ consisting of a collection of properly embedded arcs in 
  $\seif_2$, such that the intersection of each $a_i$ with $\graph_2 \setminus 
  \link_2$ is empty.  \fullref{fig:band} depicts a band of $\seif_2$ and a 
  corresponding basis arc $a_i$.

  \begin{figure}[htbp]
    \includegraphics[scale=1.0]{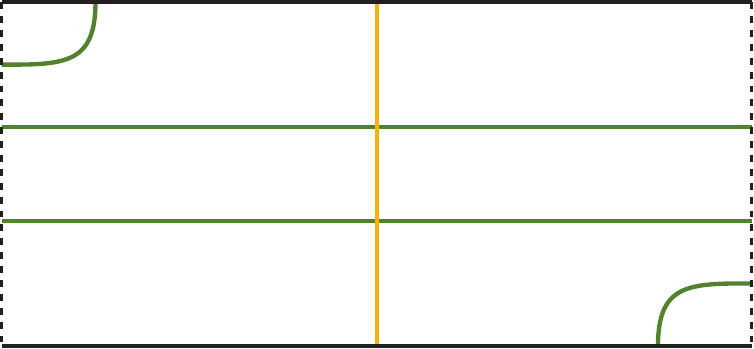}
    \caption{A band of the convex Seifert surface $\seif_2$ and a corresponding 
      basis arc $a_i$.}
    \label{fig:band}
  \end{figure}

  Now construct a (not necessarily Legendrian) link $\link_{2.5}$ as follows: 
  Take a parallel push-off of $\link_2$ in $\seif_2$ and, for each basis arc 
  $a_i$ that intersects the dividing set $\divset_{\seif_2}$, perform a finger 
  move across each of the dividing curves involved and back, as shown in 
  \fullref{fig:band_stabilize}. Since $a_i \intersect (\graph_2 \setminus 
  \link_2) = \emptyset$ for each $i$, we may assume that the finger moves avoid 
  all intersection points between $\graph_2 \setminus \link_2$ and $\seif_2$.

  \begin{figure}[htbp]
    \includegraphics[scale=1.0]{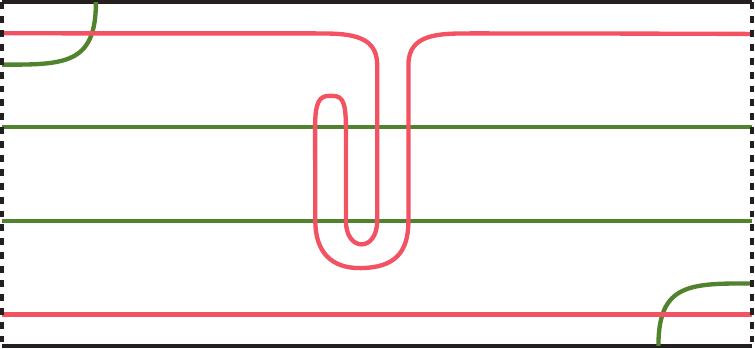}
    \caption{A double-stabilization $\link_3$ of $\link_2$ whose new Seifert 
      surface $\seif_3$ contains an arc basis disjoint from its dividing set.}
    \label{fig:band_stabilize}
  \end{figure}

  Since $\tw (\xi, \seif_2) < 0$, each component of $\seif_2 \setminus 
  \link_{2.5}$ intersects $\divset_{\seif_2}$, and we can apply the Legendrian 
  Realization Principle (LeRP) to $\link_{2.5} \subset \seif_2$ to obtain an 
  isotopy of $\seif_2$ to a convex surface $\seif_{2.5}$ with $\bdy \seif_{2.5} 
  = \link_2$, such that the image $\link_3$ of $\link_{2.5}$ under the isotopy 
  is Legendrian, and $\divset_{\seif_{2.5}}$ is the image of 
  $\divset_{\seif_2}$; see \cite{Kan98:NonExactness} and 
  \cite[Section~3]{Hon00:ClassificationI}. 

  Since the finger moves giving rise to $\link_{2.5}$---and hence 
  $\link_3$---each involved isotoping across elements of the dividing set an 
  even number of times, we have that the Legendrian link $\link_3$ is 
  necessarily an iterated double-stabilization of $\link_2$. In fact, $\link_3$ 
  is Legendrian isotopic to $\link_2$ outside of a tubular neighborhood $U_a$ 
  of the $a_i$'s.  We now construct a handle graph on $\link_3$ as follows: 
  First, extend the Legendrian isotopy between $\link_2 \setminus U_a$ and 
  $\link_3 \setminus U_a$ to a local contact isotopy, and apply the local 
  contact isotopy to the handle arcs in $\graph_2 \setminus \link_2$, obtaining 
  handle arcs that are attached to $\link_3$. Second, by 
  \fullref{lem:doub_stab}, we may add a collection $\handles_3$ of handle arcs 
  to this collection to obtain a handle graph $(\graph_3, \link_3)$ such that 
  $\Surg (\graph_3, \link_3, \handles_3)$ is Legendrian isotopic to $(\graph_2, 
  \link_2)$. In particular, this means that $\Surg (\graph_3, \link_3)$ is 
  Legendrian isotopic to $\link$. As before, by possibly Legendrian-isotoping 
  the handle arcs of $\graph_3$, we may assume that the handle arcs in of 
  $(\graph_3, \link_3)$ intersect $\seif_{2.5}$ transversely in a finite number 
  of points. Note that, by \fullref{fig:doub_stab} in the proof of 
  \fullref{lem:doub_stab}, the handle arcs of $\handles_3$ can be taken to be 
  contained in an arbitrarily small tubular neighborhood of the $a_i$'s, 
  implying that the complication in \fullref{rmk:handle-graph-limitation} does 
  not arise, since the handle arcs in $\handles_3$ are contained in a 
  neighborhood disjoint from $\graph_{2.5} \setminus \link_3$.)

  Let $\seif_3$ be the closure of the component of $\seif_{2.5} \setminus 
  \link_3$ that does not intersect $\bdy \seif_{2.5}$. Then we have obtained a 
  Legendrian link $\link_3$ bounding a convex Seifert surface $\seif_3$ that 
  contains an arc basis $\set{a_1', \dotsc, a_g'}$ that does not intersect the 
  dividing set $\divset_{\seif_3}$, and a handle graph $\graph_3$ on $\link_3$ 
  such that $\Surg (\graph_3, \link_3)$ is Legendrian isotopic to $\link$.

  We are now ready to construct the unknot $\unknot$ and desired Legendrian 
  handle graph $\graph$ on $\unknot$. We begin as above by taking a parallel 
  push-off of $\link_3$ in $\seif_3$ and topologically surgering it along the 
  basis arcs $a_i'$, to obtain a (not necessarily Legendrian) topological 
  unknot $\link_{3.5}$, together with a collection $\set{b_1, \dotsc, b_g}$ of 
  dual surgery arcs, as depicted in \fullref{fig:band_graph}. Again, we may 
  assume that $b_i \intersect (\graph_3 \setminus \link_3) = \emptyset$ for 
  each $i$.

  \begin{figure}[htbp]
    \includegraphics[scale=1.0]{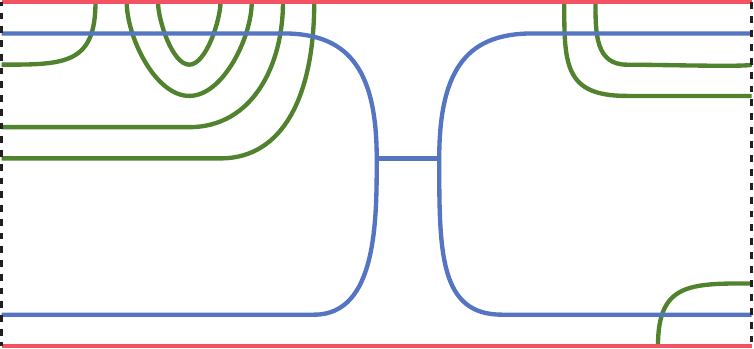}
    \caption{A Legendrian unknot $\unknot = \link_4$ contained in $\seif_3$ 
      together with dual surgery arcs $\set{b_1', \dotsc, b_g'}$.}
    \label{fig:band_graph}
  \end{figure}

  As above, since $\tw (\xi, \seif_3) < 0$ (as $\bdy \seif_3 = \link_3$ is a 
  double-stabilization of $\link_2 = \bdy \seif_2$, which has the same 
  property), each component of $\seif_3 \setminus (\link_{3.5} \union 
  \bigunion_i b_i)$ intersects $\divset_{\seif_3}$.  We may therefore again 
  apply the LeRP to $\link_{3.5} \union \bigunion_i b_i \subset \seif_3$ to 
  obtain an isotopy of $\seif_3$ to a convex surface $\seif_4$ with $\bdy 
  \seif_4 = \link_3$, such that the image $\link_4$ of $\link_{3.5}$ and the 
  image $b_i'$ of each $b_i$ under the isotopy is Legendrian, and 
  $\divset_{\seif_4}$ is the image of $\divset_{\seif_3}$.

  Again, $\link_4$ is Legendrian isotopic to $\link_3$ outside of a tubular 
  neighborhood $U_{a'}$ of the $(a_i')$'s; see 
  \cite[Section~3]{Hon00:ClassificationI}. We now construct a handle graph on 
  $\link_4$ as follows: First, extend the Legendrian isotopy between $\link_3 
  \setminus U_{a'}$ and $\link_4 \setminus U_{a'}$ to a local contact isotopy, 
  and apply the local contact isotopy to the handle arcs in $\graph_3 \setminus 
  \link_3$, obtaining handle arcs that are attached to $\link_4$. Second, we 
  add to this collection the collection $\handles_4 = \set{b_i'}$, to obtain a 
  handle graph $(\graph_4, \link_4)$ such that $\Surg (\graph_4, \link_4, 
  \handles_4)$ is Legendrian isotopic to $(\graph_3, \link_3)$. In particular, 
  this means that $\Surg (\graph_4, \link_4)$ is Legendrian isotopic to 
  $\link$.

  Let $\unknot = \link_4$ and $\graph = \graph_4$, and our proof is complete.
\end{proof}

We are now ready to prove the main result of this section. 

\begin{proof}[Proof of {\fullref{prop:min_general}}]
  According to \fullref{lem:unknot_cob}, there are oriented Legendrian unknots 
  $\unknotone$ and $\unknottwo$ and handle graphs $\graphone$ and $\graphtwo$, 
  such that $\Surg (\graphone, \unknotone)$ (resp.\ $\Surg (\graphtwo, 
  \unknottwo)$) is Legendrian isotopic to $\linkone$ (resp.\ $\linktwo$).

  Since $\rot_{[\seifone]} (\linkone) = \rot_{[\seiftwo]} (\linktwo)$, it 
  follows that $\rot (\unknotone) = \rot (\unknottwo)$. This also implies that 
  $\tb (\unknotone)$ and $\tb (\unknottwo)$ differ by a multiple of $2$. 
  Without loss of generality, assume that $\tb (\unknotone) \geq \tb 
  (\unknottwo)$; then by successively applying \fullref{lem:doub_stab} to 
  $\unknotone$ if necessary, we obtain a handle graph $(\graphoneb, 
  \unknotoneb)$ such that $\tb (\unknotoneb) = \tb (\unknottwo)$ and $\Surg 
  (\graphoneb, \unknotoneb)$ is Legendrian isotopic to $\graphone$.
  Again, by \fullref{fig:doub_stab} in the proof of \fullref{lem:doub_stab}, 
  the handle arcs of $\graphoneb$ can be taken to be contained in an 
  arbitrarily small neighborhood of a point; thus, we may combine these handle 
  arcs with those of $\graphone$, as in the proof of \fullref{lem:unknot_cob} 
  to obtain a handle graph $(\graphonet, \unknotoneb)$ such that $\Surg 
  (\graphonet, \unknotoneb)$ is Legendrian isotopic to $\linkone$.

  Now $\unknotoneb$ and $\unknottwo$ are unknots in the tight contact 
  $3$-manifold $(Y, \xi)$ with the same Thurston--Bennequin and rotation 
  numbers.  By the classification of Legendrian unknots in tight contact 
  manifolds by Eliashberg and Fraser \cite{EliFra98:UnknotTransSimple}, we have 
  that there is a contact isotopy $\phi_t$ of $(Y, \xi)$ taking $\unknotoneb$ 
  to $\unknottwo$.

  We now apply the isotopy $\phi_t$ to the Legendrian handle graph $\graphonet$ 
  and perturb the result so that the attached handles are disjoint from those 
  of $\graphtwo$. We then obtain a pair of Legendrian handle graphs for the 
  unknot $\unknottwo$, surgery along which yields Legendrian links isotopic to 
  $\linkone$ and $\linktwo$ respectively.
\end{proof}

\section{Lower bounds via diagrams}
\label{sec:min_diag}

In this section, we re-prove \fullref{lem:unknot_cob} for Legendrian links in 
the standard contact $\R^3$ using diagrammatic techniques rather than convex 
surface theory.  This proof refines that of \cite{BorTraYan13:MinLeg} to 
produce a handle graph as well as a Lagrangian cobordism from an unknot.  We 
begin with a sequence of lemmas that reduce the number of crossings of the 
front diagram of the Legendrian link in a Legendrian handle graph at the 
expense of increasing the number of handles.  But first, we state a technical 
general position result.

\begin{lemma}
  \label{lem:graph-general-position}
  For any Legendrian handle graph $(\graph, \leg)$, there exists a $C^0$-close, 
  isotopic Legendrian handle graph $(\graph', \leg')$ such that all singular 
  points of the front diagram of $\graph$ have distinct $x$ coordinates.
\end{lemma}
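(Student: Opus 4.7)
The plan is to prove the statement by a standard genericity argument: the condition that two singular points of the front projection share an $x$-coordinate is a codimension-one constraint in the space of Legendrian handle graphs $C^0$-close to $(\graph, \leg)$, and can be avoided by a small perturbation.

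First, I would enumerate the singular points of the front diagram of $\graph$. These consist of (i) cusps of the front projections of strands of $\leg$ and of the arcs in $\handles$, (ii) crossings of the front projection, including those among strands of $\leg$, among arcs of $\handles$, and between $\leg$ and $\handles$, and (iii) the trivalent vertices where handle arcs meet $\leg$. Since $\graph$ is compact and its front is piecewise smooth with isolated singularities, there are only finitely many such points, hence only finitely many pairs that might share an $x$-coordinate.

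Second, I would construct, for each singular point $p$, a $C^0$-small local Legendrian isotopy supported in an arbitrarily small neighborhood of $p$ that translates $p$ in the $x$-direction by a prescribed amount. Working in Darboux coordinates near $p$ so that $\alphastd = dz - y \, dx$, such a translation is realized directly by an explicit contact Hamiltonian (equivalently, by reparametrizing the relevant strands and bump-smoothing the front), and does not alter the combinatorial structure of the front away from $p$. Because the surgery disk conditions of \fullref{def:graph} are open conditions on $(\graph, \leg)$, these local perturbations preserve the Legendrian handle graph structure, provided the perturbations are small enough.

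Finally, I would separate the $x$-coordinates one pair at a time. Order the singular points $p_1, \dotsc, p_N$ arbitrarily, and inductively perturb $p_k$ by a local Legendrian isotopy whose $x$-translation is smaller than one-third the minimum nonzero distance between the $x$-coordinates of $p_1, \dotsc, p_{k-1}$ (so no previously-separated coincidence is re-created) and generic enough so that $p_k$ avoids the finitely many bad values of its $x$-coordinate. Concatenating these finitely many $C^0$-small local isotopies yields the desired $(\graphone', \legone')$. The only mild obstacle is checking that each local translation can be realized as a $C^0$-close Legendrian isotopy preserving the surgery-disk conditions on the handle arcs, but this is a standard Darboux-chart computation and follows from openness of those conditions.
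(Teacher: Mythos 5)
Your proposal is correct and follows essentially the same route as the paper, which simply invokes general position for the front of $\graph$ achieved by $C^0$-small perturbation. The paper's only additional content is the observation (illustrated in its figure) that pushing a triple point off a cusp of $\leg$ amounts to a Reidemeister VI move for Legendrian graphs, a bookkeeping point your perturbation argument subsumes.
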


\begin{proof}
  While this lemma simply expresses general position for the graph $\graph$, we 
  note in \fullref{fig:triple-pt-cusp}~(a) that moving a triple point off of a 
  cusp of $\leg$ is tantamount to using a Reidemeister VI move. \qedhere

  \begin{figure}[htbp]
    \labellist
    \tiny\hair 2pt
    \pinlabel {VI} [ ] at 86 88
    \pinlabel {VI} [ ] at 314 131
    \pinlabel {IV,II} [l] at 371 80
    \pinlabel {I} [ ] at 314 43
    \small
    \pinlabel {(a)} [ ] at 84 0
    \pinlabel {(b)} [ ] at 314 0
    \endlabellist
    \includegraphics[width=5in]{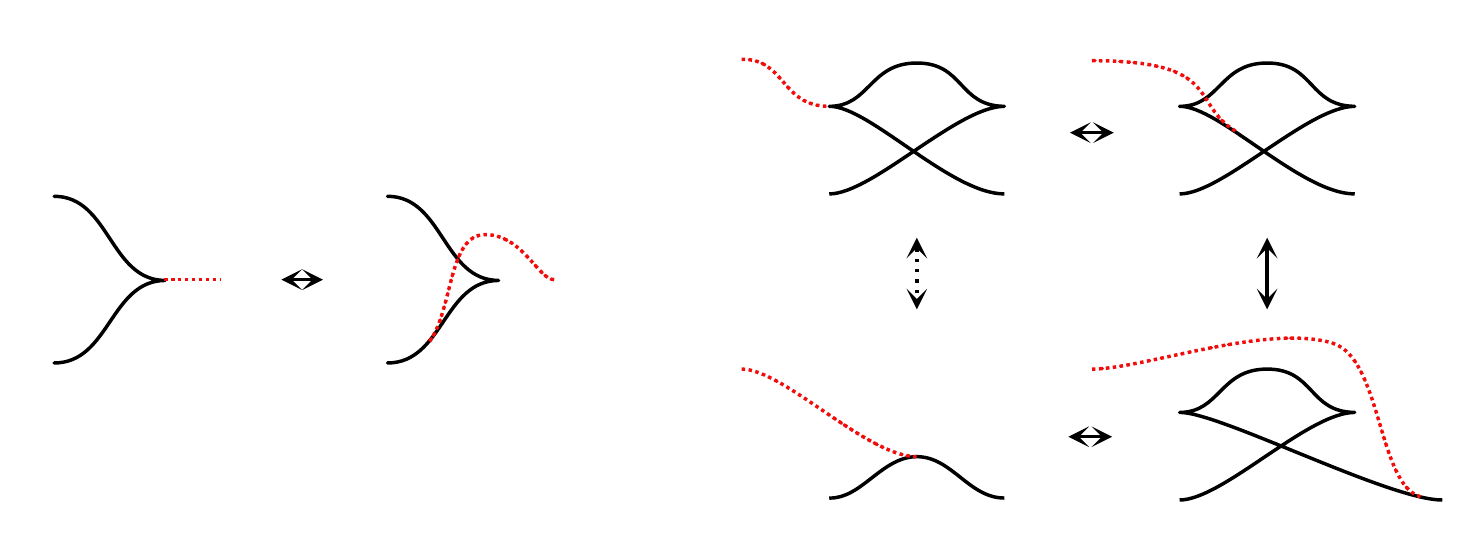}
    \caption{(a) Moving a triple point off of a cusp using a Reidemeister VI 
      move from \cite{ODoPav12:LegGraphs} and (b) Clearing a cusp of $\leg$ for 
      a Reidemeister I move.}
    \label{fig:triple-pt-cusp}
  \end{figure}
\end{proof}

First, we remove negative crossings.

\begin{lemma}
  \label{lem:negative-crossings}
  Given a Legendrian link $\leg_+$, whose front diagram has a negative 
  crossing, and a Legendrian handle graph $\graph_+$ on $\leg_+$, there exists 
  a Legendrian handle graph $(\graph_-,\leg_-)$ and a subset $\handles_0$ of 
  handles of $\graph_-$, such that $\Surg(\graph_-, \leg_-, \handles_0)$ is 
  Legendrian isotopic to $(\graph_+, \leg_+)$, and the front diagram of 
  $\leg_-$ has one fewer negative crossing than that of $\leg_+$.
\end{lemma}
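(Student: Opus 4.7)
My plan is to work in a small disk $U$ around the chosen negative crossing of $\leg_+$. First, by applying \fullref{lem:graph-general-position} if necessary, I would perturb $(\graph_+, \leg_+)$ so that every handle arc of $\graph_+ \setminus \leg_+$ is disjoint from $U$ and no other singular point of the front of $\leg_+$ lies in $U$. This lets me leave the existing handle structure of $\graph_+$ entirely untouched and work purely locally near the chosen crossing.

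Inside $U$, I would replace the two crossing strands of $\leg_+$ with their oriented (pinch) resolution, producing a front diagram consisting of two disjoint Legendrian arcs joined at a pair of cusps (one opening left, one opening right). Call the resulting Legendrian link $\leg_-$; by construction it agrees with $\leg_+$ outside $U$, and inside $U$ it has no crossings at all, so its total negative-crossing count is one less than that of $\leg_+$. I would then add a single small Legendrian arc $\arc_0 \subset U$ connecting the two resolved strands near the two new cusps, and set $\graph_- = \leg_- \cup (\graph_+ \setminus \leg_+) \cup \arc_0$ and $\handles_0 = \set{\arc_0}$; the handle arcs of $\graph_+$ transfer unchanged and constitute the complement of $\handles_0$ in $\graph_-$.

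Two checks then remain. First, that $\arc_0$ is a legitimate surgery disk for $\leg_-$: conditions (1) and (2) of the surgery-disk definition are immediate from the local picture, while condition (3) on the framing is exactly where the hypothesis that the original crossing is \emph{negative} is used. The oriented resolution of a negative crossing yields a pair of strands whose orientations at the two endpoints of $\arc_0$ induce framings that are compatible in the sense of condition (3), whereas the analogous resolution of a positive crossing would produce framings that disagree on one end and agree on the other, failing the condition. Second, that $\Surg(\graph_-, \leg_-, \set{\arc_0})$ recovers $(\graph_+, \leg_+)$ up to Legendrian isotopy: this follows by comparing $U$ with the standard local model in \fullref{fig:standard-surgery}, where ambient surgery along $\arc_0$ reconnects the two resolved strands into the original crossing configuration.

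The main obstacle, and the place requiring careful bookkeeping, is the verification of condition (3) in the local front-diagram model, together with the confirmation that the oriented resolution can be drawn as a genuine pair of Legendrian arcs with correctly oriented cusps and no spuriously introduced crossings. Both reduce to a small case analysis over the possible configurations of over/under strand and orientation at a negative crossing, which can be dispatched by a brief catalog of local pictures analogous to \fullref{fig:standard-surgery}.
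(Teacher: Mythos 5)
Your construction breaks down at its central step, and the failure is certified numerically. You take $\legm$ to be the oriented resolution of the chosen negative crossing and claim that ambient surgery along a single arc $\arc_0$ joining the two new cusped strands ``reconnects the two resolved strands into the original crossing configuration.'' It does not: Legendrian ambient surgery along an arc joining two strands produces the reconnection of the standard model (\fullref{fig:standard-surgery}), which is a crossingless smoothing of the two strands near the arc --- it never creates a transverse crossing of the two resolved strands. More robustly, no choice of arc (or of several arcs) can rescue this with your $\legm$: each ambient surgery is the attachment of a Lagrangian $1$-handle, so by the relation $\tb(\legp)-\tb(\legm)=-\chi(\cob)$ quoted in the introduction, $\tb\bigl(\Surg(\graph_-,\legm,\handles_0)\bigr)=\tb(\legm)+\abs{\handles_0}>\tb(\legm)$ whenever $\handles_0\neq\eset$. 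But the oriented resolution of a \emph{negative} front crossing leaves $\tb$ unchanged: the writhe increases by one and exactly one new right cusp appears. Hence $\tb(\legm)=\tb(\legp)$, and the surgered link can never be Legendrian isotopic to $\legp$. (This is exactly the asymmetry between the two signs: a single pinch matches the $\tb$ count for resolving a \emph{positive} crossing, whose oriented resolution drops $\tb$ by one, not a negative one.)

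Your orientation check also goes the wrong way. At a negative front crossing the two strands are antiparallel, so the two cusps of the oriented resolution are traversed in the same vertical direction; an arc joining them then violates the framing condition (3) for a surgery disk (compare the failing configuration in \fullref{fig:standard-disk}), so $\arc_0$ is not even an admissible oriented surgery disk --- the opposite of what you assert in your ``main obstacle'' discussion. The paper's proof is correspondingly not a one-handle resolution: after isolating the crossing with \fullref{lem:graph-general-position}, it gives an explicit local handle-graph model (\fullref{fig:negative-crossings}) in which $\legm$ is a configuration of strictly smaller Thurston--Bennequin number (extra cusps, with the handles recorded by cores and co-cores), and $(\graph_+,\legp)$ is recovered from it by the surgeries together with a subsequent Legendrian isotopy. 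Any repair of your argument must change $\legm$ so that its $\tb$ drops by exactly the number of handles you attach; the oriented resolution alone can never serve as $\legm$.
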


\begin{proof}
  After applying \fullref{lem:graph-general-position} to isolate negative 
  crossings, the proof of \fullref{lem:negative-crossings} is contained in 
  \fullref{fig:negative-crossings}. \qedhere

  \begin{figure}[htbp]
    \labellist
    \small\hair 2pt
    \pinlabel {$(\graph_-, \leg_-)$} [ ] at 102 17
    \pinlabel {$(\graph_+, \leg_+)$} [l] at 420 285
    \endlabellist
    \includegraphics[width=5in]{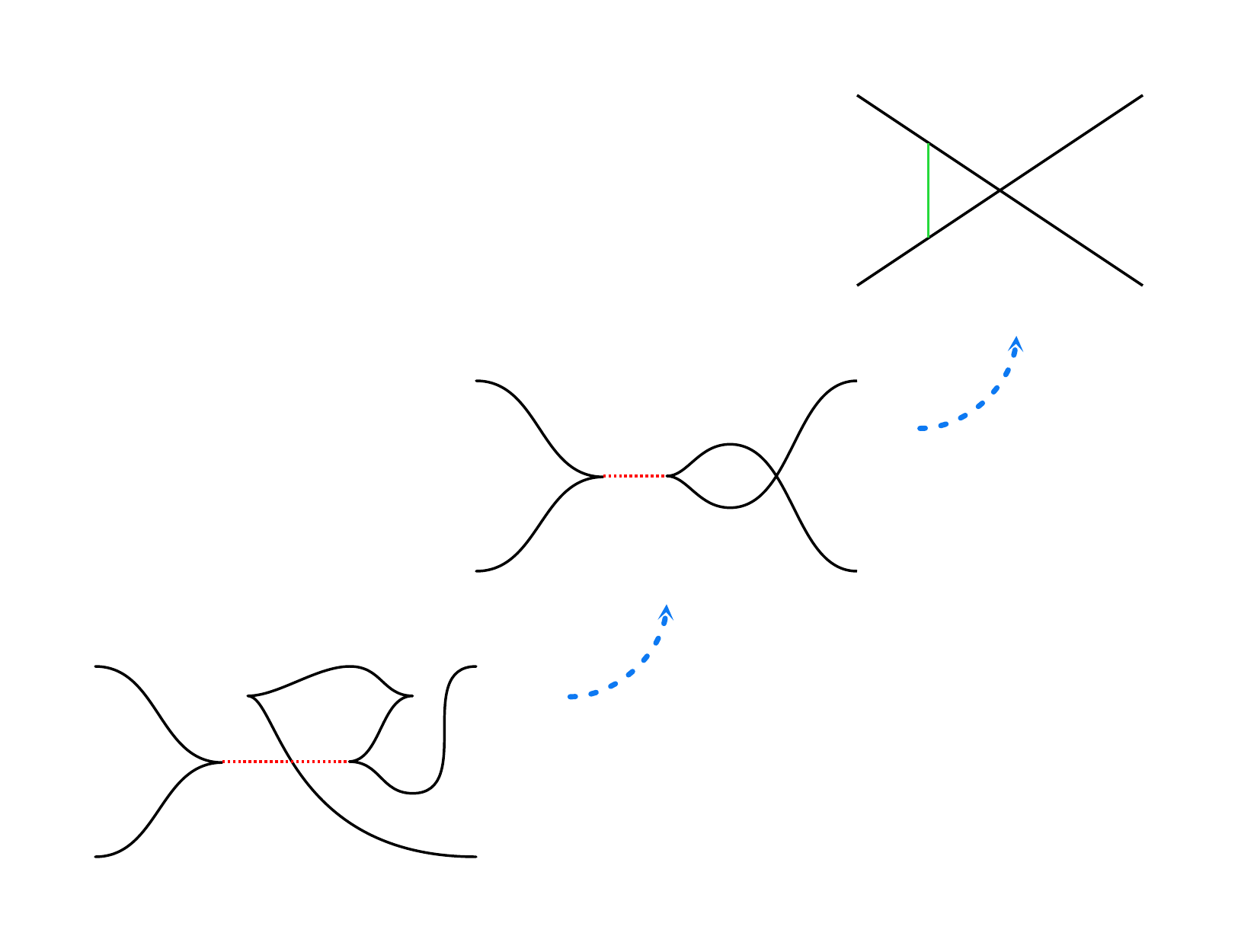}
    \caption{The Legendrian handle graph $(\graph_-,\leg_-)$ has one fewer 
      negative crossing than $(\graph_+,\leg_+)$. Red curves represent surgery 
      disks, i.e.\ cores of handles, while green curves represent co-cores.}
    \label{fig:negative-crossings}
  \end{figure}
\end{proof}

Next, we remove positive crossings.

\begin{lemma}
  \label{lem:positive-crossings}
  Given a Legendrian link $\leg_+$, the leftmost crossing of whose front 
  diagram is positive, and a Legendrian handle graph $\graph_+$ on $\leg_+$, 
  there exists a Legendrian handle graph $(\graph_-,\leg_-)$ and a subset 
  $\handles_0$ of handles of $\graph_-$, such that $\Surg(\graph_-, \leg_-, 
  \handles_0)$ is Legendrian isotopic to $(\graph_+, \leg_+)$ and the front 
  diagram of $\leg_-$ has one fewer positive crossing than that of $\leg_+$.
\end{lemma}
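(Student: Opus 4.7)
The plan is to mimic the strategy of \fullref{lem:negative-crossings}, producing a local model picture that replaces the leftmost positive crossing with a configuration containing a new surgery handle. The hypothesis that the crossing is leftmost is crucial: to the left of it, the front diagram of $\leg_+$ can only contain left-pointing cusps with no further crossings, and by \fullref{lem:graph-general-position}, we may assume the crossing is isolated from cusps and from the handle arcs of $\graph_+$.

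First, I would apply \fullref{lem:graph-general-position} to put $(\graph_+,\leg_+)$ in general position, isolating the leftmost positive crossing in a small disk $U$ containing only the crossing itself, and then I would trace back to identify the two left-cusps from which the crossing strands emanate (using Reidemeister I moves and horizontal isotopies to bring them close to the crossing, as in \fullref{fig:triple-pt-cusp}(b)). Second, I would define $\leg_-$ by replacing the local configuration (two left cusps followed by a positive crossing) with an alternative front on the same boundary data: after the two cusps, the two strands are joined by swapping, creating one additional pair of cusps but no crossing, and a new Legendrian arc $\arc$ is drawn bridging the two new cusp regions inside $U$. The Legendrian handle graph $\graph_-$ is obtained by adding $\arc$ to the arcs of $\graph_+$ (carried into $U^c$ by an ambient Legendrian isotopy), and the distinguished subset $\handles_0$ consists of the single new arc $\arc$.

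I would then verify two things: (a) that $\arc$ is a bona fide surgery disk — condition (1) and (2) of the definition hold by construction, and condition (3) on the framing is checked from the local front model, where $d\alpha$ pairs transversely with the outward tangent to $\arc$ in a way consistent with the orientation of $\leg_-$ (this is where the sign of the crossing becomes relevant, and it is the reason the ``positive crossing'' and ``leftmost'' hypotheses appear); and (b) that performing Dimitroglou Rizell's Legendrian ambient surgery along $\arc$ recovers the positive crossing locally, i.e.\ $\Surg(\graph_-,\leg_-,\handles_0)$ is Legendrian isotopic to $(\graph_+,\leg_+)$. The latter is a Reidemeister-move computation inside $U$ using the standard model of \fullref{fig:standard-surgery}, combined with the ambient isotopy extending to the complement of $U$ and carrying along the unchanged handles of $\graph_+$.

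The main obstacle I expect is verifying the framing compatibility (condition (3) for a surgery disk) for the new arc $\arc$. Unlike the negative crossing case, where the two strands near the crossing have opposite slopes that align naturally with the disk's characteristic foliation, a positive crossing has strands whose slopes can conflict with the surgery-disk framing. This is precisely why the hypothesis demands that the crossing be the \emph{leftmost} one: only in that case can we route $\arc$ through the two adjacent left cusps so that the framing dictated by $d\alpha$ along $\bdy\arc$ matches the orientation of $\leg_-$ in the required way, avoiding the failure illustrated in \fullref{fig:standard-disk}(c). Once this local check is done, the global part of the argument — extending Legendrian isotopies past the handles of $\graph_+$ and appealing to \fullref{prop:graph-surg-order} — is routine.
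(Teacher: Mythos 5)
Your proposal replaces the paper's argument with a purely local one --- isolate the leftmost positive crossing in a small disk $U$, modify the front inside $U$, and add a single surgery arc $\arc$ with $\handles_0 = \set{\arc}$ --- and this cannot work as described. First, a count rules it out: each Legendrian ambient surgery along one arc is a single Lagrangian $1$-handle, so $\tb\paren{\Surg(\graph_-, \leg_-, \handles_0)} = \tb(\leg_-) + 1$ (equivalently, in the standard model of \fullref{fig:standard-surgery} a surgery removes a left--right cusp pair, and $\tb = \text{writhe} - \#\set{\text{right cusps}}$). Your $\leg_-$ deletes a positive crossing and adds a cusp pair, so $\tb(\leg_-) = \tb(\leg_+) - 2$, and hence the surgered link has $\tb = \tb(\leg_+) - 1 \neq \tb(\leg_+)$; it cannot be Legendrian isotopic to $\leg_+$ no matter how the framing check in condition (3) comes out. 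Relatedly, Dimitroglou Rizell surgery never creates a crossing inside a small disk --- it merges two strands facing each other through cusps into two parallel strands --- so the crossing $X_0$ must be recreated by Legendrian isotopy involving material outside $U$, which a one-arc local model does not provide. Second, your setup assumes the two strands leaving $X_0$ to the left terminate in two left cusps that can be brought adjacent to the crossing; the half-plane left of $X_0$ is only guaranteed to be crossingless, and may contain many nested arcs, other components of $\leg_+$, and attached handles of $\graph_+$, so this normal form is not available in general.

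The paper's proof is global, not local, and uses the hypotheses differently than you expect. It follows the entire strand $\strand \subset \leg_+$ leaving $X_0$ along the upper-left (or lower-left) branch until it next returns to the $x$-coordinate of $X_0$; the ``leftmost'' hypothesis is what guarantees this return happens directly above or below $X_0$, so it is a routing/topology hypothesis, not a framing one. One then builds a finger of $\leg_+$ parallel to $\strand$, using a Reidemeister I move at the start and at each cusp of $\strand$ and Reidemeister II moves to pass the finger through the handles of $\graph_+$ met along the way, places a co-core inside each Reidemeister I kink and two more flanking $X_0$, replaces these co-cores by surgery disks, and finally isotopes (using \fullref{fig:triple-pt-cusp}~(b) and Reidemeister I moves) to remove the crossings. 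The resulting $(\graph_-, \leg_-)$ has \emph{many} new handles, and only the totality of the corresponding surgeries, followed by Legendrian isotopy, reproduces $(\graph_+, \leg_+)$. So the gap in your proposal is not merely the unverified framing condition you flag at the end; it is that the single-handle local replacement is arithmetically and structurally impossible, and the genuine content of the lemma lies in the strand-and-finger construction you have omitted.
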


\begin{proof}
  Apply \fullref{lem:graph-general-position} to isolate crossings and cusps of 
  $\leg_+$ from handles of $\graph_+$. Consider the leftmost crossing $X_0$ of 
  $\leg_+$.  Without loss of generality, we may assume that $\leg_+$ is 
  oriented from right to left on both strands of $X_0$. The upper-left strand 
  incident to $X_0$ must thus next return to $X_0$; the same is true for the 
  bottom-left strand.  Since there are no crossings of $\leg_+$ to the left of 
  $X_0$, either the upper left strand  must next cross the $x$-coordinate of 
  $X_0$ above $X_0$ or the lower left strand must next cross the $x$-coordinate 
  of $X_0$ below $X_0$.  Without loss of generality, assume that this holds for 
  the upper left strand as in the upper-right portion of 
  \fullref{fig:positive-crossing}. Let $-\strand \subset \leg_+$ be the compact 
  $1$-manifold that starts at $X_0$ and traverses along the upper-left strand 
  of $X_0$ until returning to the same $x$-coordinate, and let $\strand$ be 
  $-\strand$ with the orientation reversed, so that $X_0$ is at the end of 
  $\strand$.

  As in the second diagram down in \fullref{fig:positive-crossing}, create a 
  finger of $\leg_+$ parallel to $\strand$ using a Reidemeister I move at the 
  initial point of $\strand$ and next to every cusp of $\strand$ along with 
  Reidemeister II moves to pass the lead cusp of the finger through handles of 
  $\graph_+$ that are incident to $\strand$.  Move the end of the finger just 
  to the right of $X_0$.  Place a co-core of a handle inside the finger just 
  below each crossing created by a Reidemeister I move.  Place two additional 
  co-cores from the finger to the original link on either side of the crossing 
  $X_0$.

  \begin{figure}[htbp]
    \labellist
    \small\hair 2pt
    \pinlabel {$(\graph_+, \leg_+)$} [ ] at 255 507
    \pinlabel {$(\graph_-, \leg_-)$} [ ] at 78 45
    \endlabellist
    \includegraphics[width=5in]{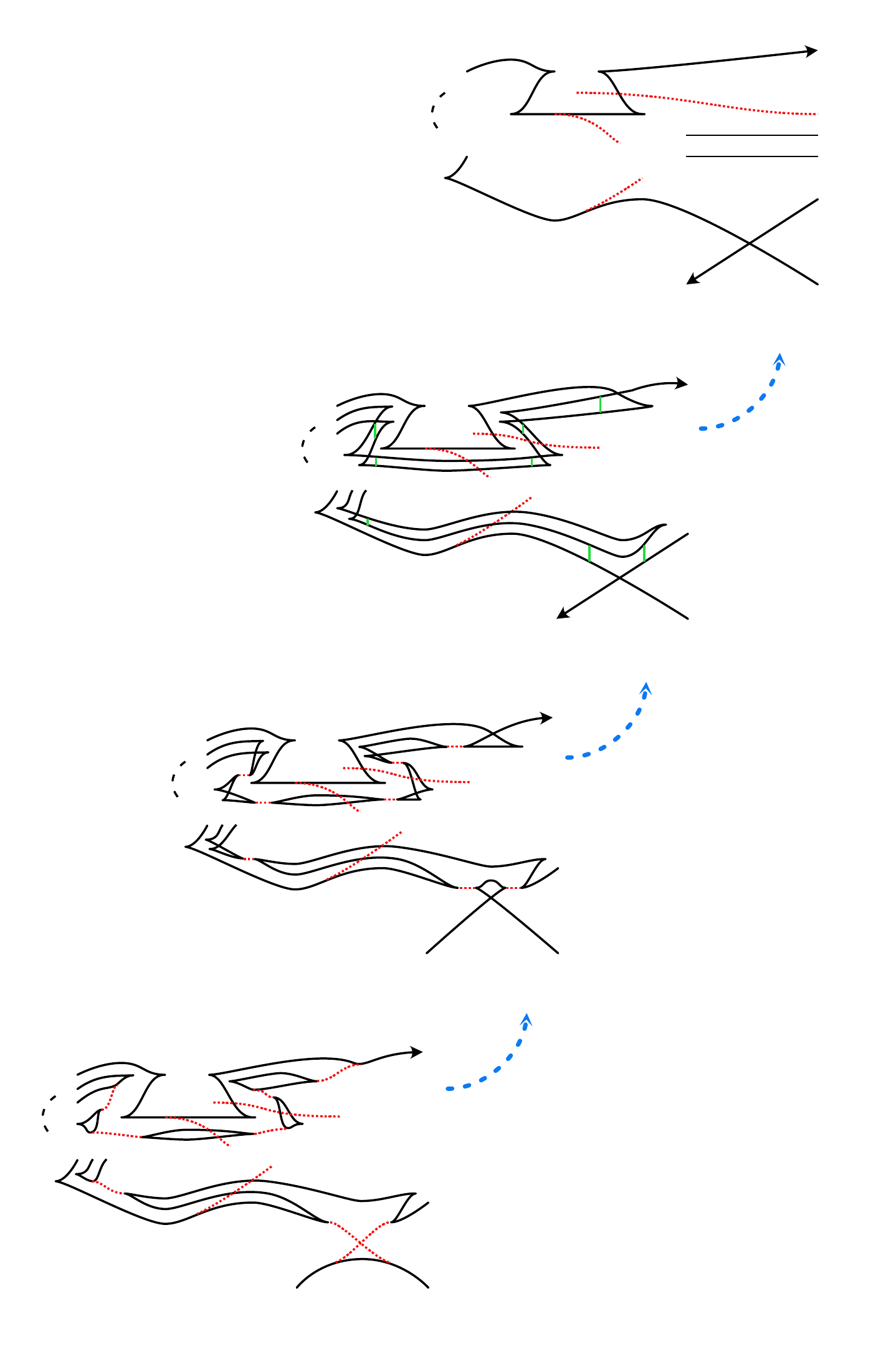}
    \caption{The Legendrian handle graph $(\graph_-,\leg_-)$ has one fewer 
      positive crossing than $(\graph_+,\leg_+)$. Red curves represent cores of 
      handles, while green curves represent co-cores.}
    \label{fig:positive-crossing}
  \end{figure}

  Finally, replace the co-cores by surgery disks to create a new Legendrian 
  handle graph as in the third row of \fullref{fig:positive-crossing}.  Isotope 
  the new Legendrian handle graph as at the bottom of 
  \fullref{fig:positive-crossing}, using a combination of the move in 
  \fullref{fig:triple-pt-cusp}~(b) to move the handles away and Reidemeister I 
  moves to remove the crossings.  The result is a Legendrian handle graph that 
  has many more surgery disks, but whose underlying Legendrian link has one 
  fewer crossing than before.
\end{proof}

The procedure above may produce a disconnected Legendrian link.  We next see 
how to join these components.

\begin{lemma}
  \label{lem:combine-components}
  Let $(\graph_+, \leg_+)$ be a Legendrian handle graph, where $\leg_+$ has $n 
  \geq 2$ components, which are mutually disjoint in the front diagram.  
  Suppose that there exists a path $\smpath$ in the front diagram of $\graph_+$ 
  that starts on component $\leg_+' \subset \leg_+$, ends on $\leg_+'' \subset 
  \leg_+$, and does not intersect $\leg_+$ otherwise.  Then there exists a 
  Legendrian handle graph $(\graph_-,\leg_-)$ and a subset $\handles_0$ of 
  handles of $\graph_-$, such that $\Surg(\graph_-, \leg_-, \handles_0)$ is 
  Legendrian isotopic to $(\graph_+,\leg_+)$, one component of $\leg_-$ is 
  topologically the connected sum of $\leg_+'$ and $\leg_+''$, the other 
  components of $\leg_-$ match the remaining components of $\leg_+$, and none 
  of the components of $\leg_-$ intersect in the front diagram.
\end{lemma}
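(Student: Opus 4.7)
The plan is to use the path $\smpath$ as a guide for a Legendrian band $B$ that joins $\leg_+'$ and $\leg_+''$ into their ambient connect sum, and to add a single co-core handle $\disk^*$ to $\graph_-$ so that ambient surgery along $\disk^*$ cuts $B$ back apart and restores $(\graph_+, \leg_+)$.

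First, I would apply \fullref{lem:graph-general-position} and further perturb $\smpath$ so that the cusps of $\smpath$, its transverse intersections with handles of $\graph_+$, and the singular points of the front diagram of $\graph_+$ all occur at distinct $x$-coordinates. Then, mirroring the finger construction from the proof of \fullref{lem:positive-crossings} (see \fullref{fig:positive-crossing}), I would build a Legendrian parallel push-off $\widetilde{\smpath}$ of $\smpath$ in the front diagram: a Reidemeister~I move near each cusp of $\smpath$ and a Reidemeister~II move at each point where $\smpath$ meets a handle of $\graph_+$ produce a Legendrian arc $\widetilde{\smpath}$ running just beside $\smpath$, and together they bound a thin Legendrian band $B$. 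Near the endpoints $p' = \smpath \cap \leg_+'$ and $p'' = \smpath \cap \leg_+''$, I would cut $\leg_+'$ and $\leg_+''$ open and splice each pair of resulting endpoints to $\smpath$ and $\widetilde{\smpath}$, forming a single Legendrian component, topologically $\leg_+' \connsum \leg_+''$. Together with the remaining components of $\leg_+$ (which are left untouched), this defines $\leg_-$; choosing $\widetilde{\smpath}$ sufficiently close to $\smpath$ keeps the components of $\leg_-$ mutually disjoint in the front diagram.

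Next, I would construct $\graph_-$ by keeping all handles of $\graph_+$ (reinterpreting endpoints that previously lay on $\leg_+'$ or $\leg_+''$ as lying on the connect-summed component of $\leg_-$), and then adding a single new handle $\disk^*$ whose core is a short Legendrian arc transverse to $B$ at an interior point, with endpoints on $\smpath$ and $\widetilde{\smpath}$. Setting $\handles_0 = \{\disk^*\}$, the standard surgery model of \fullref{fig:standard-surgery} together with the locality of ambient surgery (\fullref{rmk:local-surgery}) shows that ambient surgery along $\disk^*$ cuts $B$ at that point, and the Reidemeister~I and II moves introduced in the previous step can be undone pairwise to recover the original front diagram of $\leg_+$ together with all handles of $\graph_+$.

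The main obstacle I anticipate is verifying surgery disk condition~(3) from \fullref{ssec:rizell} for the new handle $\disk^*$: the framings at the two endpoints of $\disk^*$ induced by the orientation of $\leg_-$ must be consistently aligned (or consistently misaligned) with the canonical framing determined by $d\alpha$. This is a local check in the standard model and can be arranged in advance by orienting the band $B$ compatibly with the given orientations of $\leg_+'$ and $\leg_+''$, which is always possible for a connect sum of two oriented arcs.
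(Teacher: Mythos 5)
Your proposal is correct and follows essentially the same route as the paper's proof: guide a Legendrian band along $\smpath$ (handling cusps and handle crossings with Reidemeister I and II moves) to form the oriented connected sum as $\leg_-$, and add a single dual surgery arc across the band, whose ambient surgery cuts the band and, after retracting the resulting fingers, recovers $(\graph_+,\leg_+)$; your orientation concern for the new handle is exactly what the paper resolves with an extra Reidemeister I move to make the spliced strands anti-parallel. The only (cosmetic) difference is that the paper pushes a finger of $\leg_+'$ all the way along $\smpath$ and places the dual arc next to $\leg_+''$, whereas you place it at an interior point of the long band.
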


\begin{proof}
  We may assume that $\smpath$ intersects $\leg_+'$ and $\leg_+''$ away from 
  triple points, crossings, and cusps.	Create a finger of $\leg_+'$ that 
  follows $\smpath$, starting with a Reidemeister I move and using Reidemeister 
  II moves to cross handles of $\graph_+$ and additional Reidemeister I moves 
  when $\smpath$ has a vertical tangent; see the middle diagram of 
  \fullref{fig:combine-components}.  Stop the finger just before $\smpath$ 
  intersects $\leg_+''$, performing an additional Reidemeister I move if 
  necessary to ensure that the orientations of parallel strands of the finger 
  and $\leg_+''$ are opposite. Place a co-core of a handle between those two 
  parallel strands.  Finally, replace the co-core by a core of a handle to 
  create a new Legendrian handle graph $(\graph_-, \leg_-)$ as in the 
  bottom-left portion of \fullref{fig:combine-components}.

  \begin{figure}[htbp]
    \labellist
    \small\hair 2pt
    \pinlabel {$\leg''_+$} [ ] at 292 435
    \pinlabel {$\leg'_+$} [ ] at 244 347
    \pinlabel {$\leg''_+$} [ ] at 189 271
    \pinlabel {$\leg'_+$} [ ] at 141 183
    \pinlabel {$\leg'_+ \# \leg''_+$} [ ] at 145 45
    \pinlabel {$\smpath$} [ ] at 315 365
    \endlabellist
    \includegraphics[width=4in]{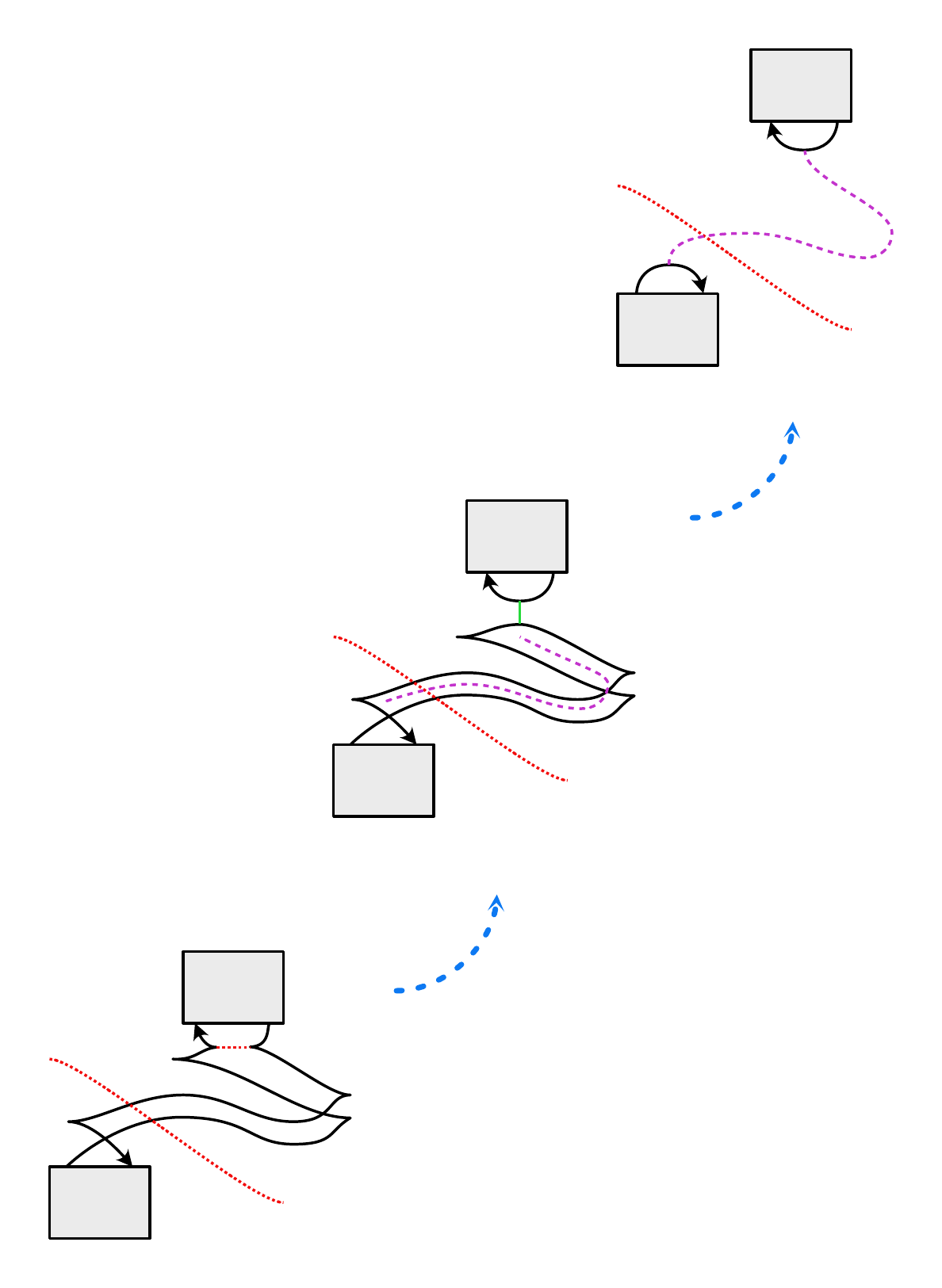}
    \caption{The Legendrian link $\leg_-$ has a component that is topologically 
      the connected sum of two components of $\leg_+$.}
    \label{fig:combine-components}
  \end{figure}

  That the new component of $\leg_-$ is the connect sum of $\leg_+'$ and 
  $\leg_+''$ comes from the facts that the diagrams of $\leg_+'$ and $\leg_+''$ 
  are disjoint and that $\smpath$ is disjoint from the diagram of $\leg_+$ on 
  its interior.	The final two conclusions of the lemma follow immediately from 
  the construction.
\end{proof}

With the tools above in place, we are ready to re-prove 
\fullref{lem:unknot_cob} using the diagrammatic techniques of this section.

\begin{proof}[Diagrammatic proof of {\fullref{lem:unknot_cob}} in $(\R^3, 
  \xistd)$]
  Given a Legendrian $\link$, use \fullref{lem:negative-crossings} repeatedly, 
  and then \fullref{lem:positive-crossings} repeatedly, to obtain a Legendrian 
  handle graph $(\graph_1, \leg_1)$ such that the front diagram of $\leg_1$ has 
  no crossings, and $\Surg (\graph_1, \leg_1)$ is Legendrian isotopic to 
  $\link$.  Use \fullref{lem:combine-components} to find a Legendrian handle 
  graph $(\graph_2, \leg_2)$ with a subset $\handles_0$ of handles, such that 
  $\leg_2$ is connected, and $\Surg (\graph_2, \leg_2, \handles_0)$ is 
  Legendrian isotopic to $\Surg (\graph_1, \leg_1)$, which implies that $\Surg 
  (\graph_2, \leg_2)$ is Legendrian isotopic to $\link$.  Finally, note that 
  $\leg_2$ is a smooth unknot since it is the connected sum of smooth unknots.
\end{proof}

\section{Upper bounds}
\label{sec:max}

With constructions of a common lower bound and corresponding handle graphs for 
$\legone$ and $\legtwo$ in hand, we are ready to find an upper bound.  The 
structure of the following proof parallels that of Lazarev 
\cite{Laz20:MaxContSymp} in higher dimensions.

\begin{proof}[Proof of {\fullref{prop:slices}}]
  Given oriented Legendrian links in $\legone$ and $\legtwo$ in $(Y, \xi)$, 
  \fullref{prop:min_general} implies that there exist an oriented Legendrian 
  link $\legm$ and Legendrian handle graphs $(\graphone, \legm)$ and 
  $(\graphtwo, \legm)$ such that $\Surg (\graphone, \legm)$ (resp.\ $\Surg 
  (\graphtwo, \legm)$) is Legendrian isotopic to $\legone$ (resp.\ $\legtwo$).

  We Legendrian isotope the handles $\handlestwo$ of $\graphtwo$ to be in 
  general position with respect to the handles $\handlesone$ of $\graphone$.  
  In particular, we may assume that the Legendrian handle graph $(\graphtwo, 
  \legm)$ has $\handlestwo$ is disjoint from $\handlesone$, with $\Surg 
  (\graphtwo, \legm)$ still Legendrian isotopic to $\legtwo$.

  Define the Legendrian graph $\graph_+ = \graphone \union \graphtwo$; it is 
  clear that $(\graph_+, \legm)$ is a Legendrian handle graph. Note that $\Surg 
  (\graph_+, \legm, \handlesone)$ is Legendrian isotopic to a Legendrian handle 
  graph $(\graph_{+, 1}, \legone)$; similarly, $\Surg (\graph_+, \legm, 
  \handlestwo)$ is Legendrian isotopic to a Legendrian handle graph 
  $(\graph_{+, 2}, \legtwo)$.

  Observe that both $\Surg (\graph_{+, 1}, \legone)$ and $\Surg (\graph_{+, 2}, 
  \legtwo)$ are Legendrian isotopic to $\Surg (\graph_+, \legm)$, which we 
  denote by $\legp$. Let $\cobone \colon \legm \to \legp$ be the concatenation 
  of $\cob (\graphone, \legm)$ with $\cob (\graph_{+, 1}, \legone)$; similarly, 
  let $\cobtwo \colon \legm \to \legp$ be the concatenation of $\cob 
  (\graphtwo, \legm)$ with $\cob (\graph_{+, 2}, \legtwo)$. Then it is clear 
  that $\legone$ (resp.\ $\legtwo$) appears as a collared slice of $\cobone$ 
  (resp.\ $\cobtwo$).  At the same time, \fullref{prop:graph-surg-order} 
  implies that $\cobone$ and $\cobtwo$ are exact-Lagrangian isotopic, since 
  they are both obtained from the same Legendrian handle graph $(\graph_+, 
  \legm)$ by Legendrian ambient surgery, only in a different order---in other 
  words, they both belong to the isotopy class $\cob (\graph_+, \legm)$.
\end{proof}

\begin{example}
  \fullref{fig:trefoil-52-full-process} and 
  \fullref{fig:fig8-unknot-full-process} display the full process of creating 
  the upper bounds in \fullref{fig:main-example1} and 
  \fullref{fig:main-example2}, respectively.

  \begin{figure}[htbp]
    \includegraphics{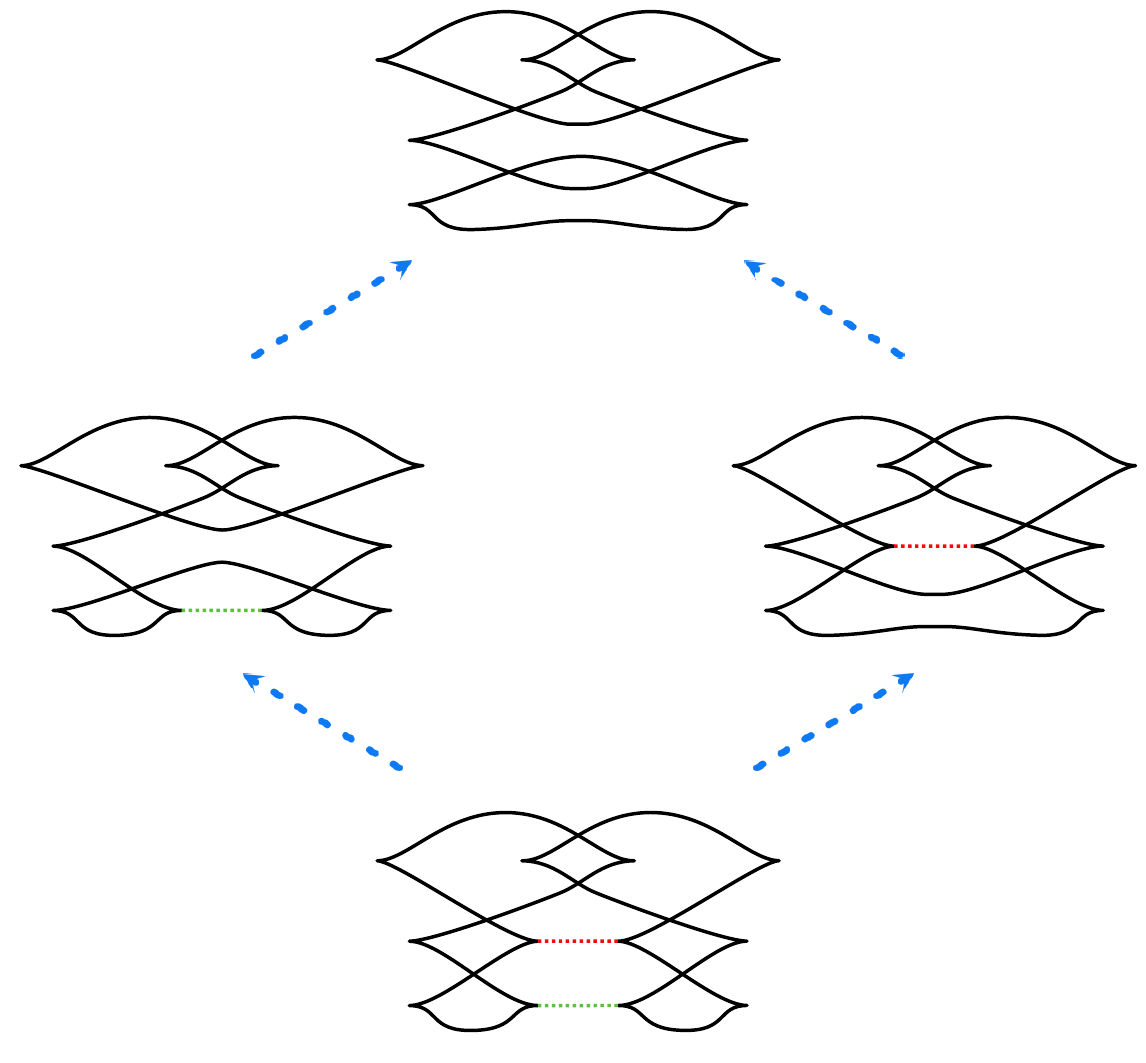}
    \caption{The handle graph at the bottom of the figure is used to create the 
      upper bound of the trefoil and an $m(5_2)$ knot that appeared in 
      \fullref{fig:main-example1}.}
    \label{fig:trefoil-52-full-process}
  \end{figure}

  \begin{figure}[htbp]
    \includegraphics{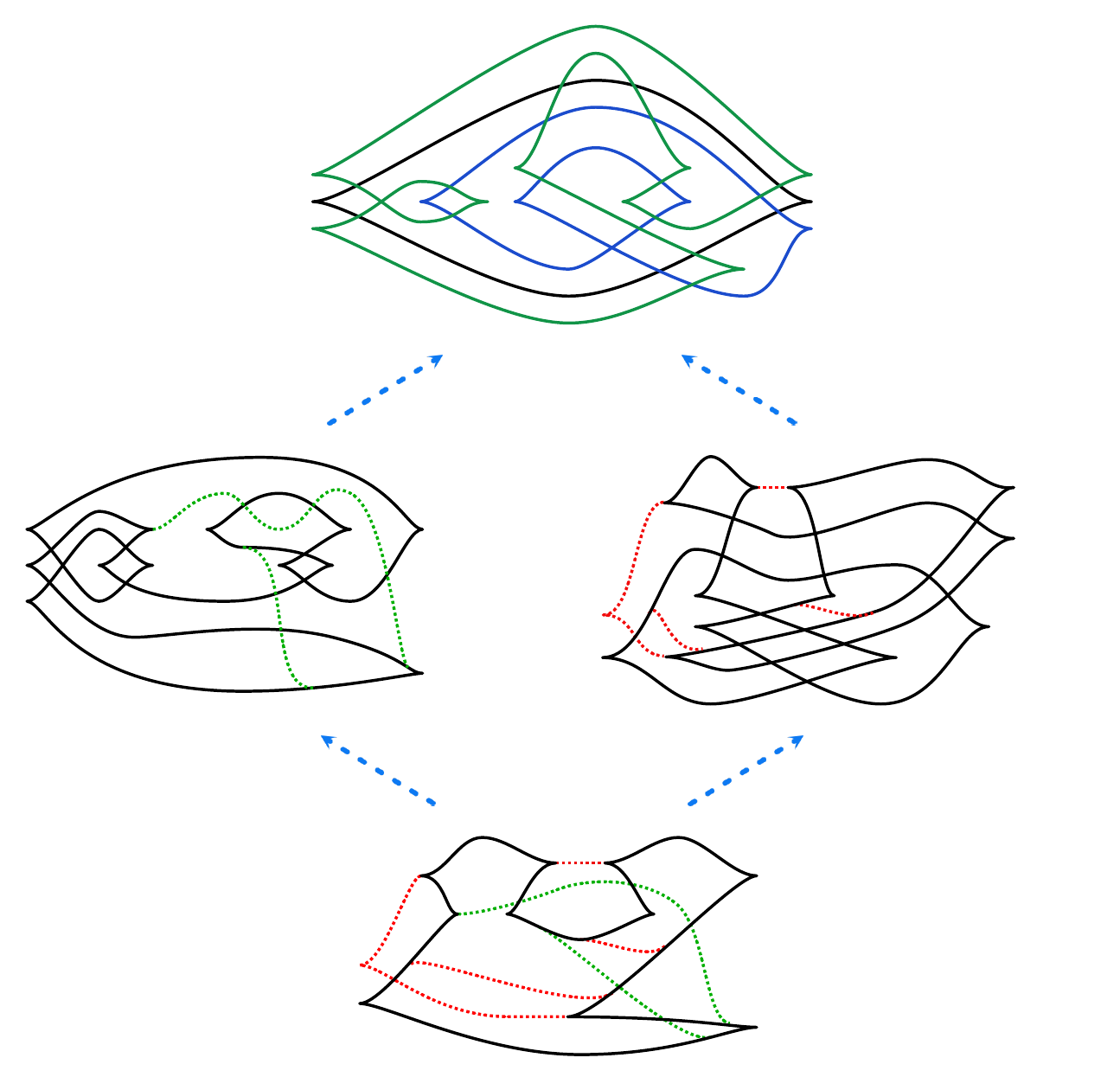}
    \caption{The handle graph at the bottom of the figure is used to create the 
      upper bound of the figure eight knot and the unknot that appeared in 
      \fullref{fig:main-example2}.  Note that the Legendrian knots in the 
      handle graphs in the middle level are isotopic to the unknot (left) and 
      the figure eight (right).}
    \label{fig:fig8-unknot-full-process}
  \end{figure}
\end{example}

\section{The Lagrangian Cobordism Genus}
\label{sec:genus}

In this section, we use the construction of upper and lower bounds for a pair 
of Legendrian knots to define a new quantity, the relative Lagrangian genus, 
and a new relation, Lagrangian quasi-concordance.  We explore foundational 
properties and immediate examples, leaving deeper explorations, as embodied in 
the list of open questions at the end, for future work.  For ease of notation, 
we work with Legendrian links in the standard contact $\R^3$, though our 
definitions may easily be adapted to Legendrians in any tight contact 
$3$-manifold.

\subsection{Lagrangian Quasi-Cobordism}
\label{ssec:quasi-cob}

We begin with a definition that undergirds the two concepts referred to above.

\begin{definition}
  \label{defn:quasi-cob}
  A \emph{Lagrangian quasi-cobordism} between Legendrian links $\leg$ and 
  $\leg'$ consists of an ordered set of $n+1$ Legendrian links
  \[
    \legsint = (\leg = \leg_0, \leg_1, \dotsc, \leg_n = \leg'),
  \]
  another ordered set of $n$ nonempty Legendrian links
  \[
    \legsuplow = (\leg^*_1, \dotsc, \leg^*_n),
  \]
  such that $\leg^*_i$ is an upper or lower bound for the pair 
  $(\leg_{i-1},\leg_i)$, and the Lagrangian cobordisms
  \[
    \cobsint = (\cob_1^<, \cob_1^>, \cob_2^<, \cob_2^>, \dotsc, \cob_n^<, 
    \cob_n^>)
  \]
  that realize the upper or lower bound constructions.
\end{definition}

There are several quantities associated to a Lagrangian quasi-cobordism.  

\begin{definition}
  \label{defn:quasi-genus}
  Given a Lagrangian quasi-cobordism $(\legsint, \legsuplow, \cobsint)$, its 
  \emph{length} is one less than the number of elements in $\legsint$, while 
  its \emph{Euler characteristic} $\chi (\legsint, \legsuplow, \cobsint)$ is 
  the sum of the Euler characteristics of the Lagrangians in $\cobsint$ and its 
  \emph{genus} $g (\legsint, \legsuplow, \cobsint)$ defined, as usual, in terms 
  of the Euler characteristic. 

  Further, we define the \emph{relative Lagrangian genus} $g_L (\leg, \leg')$ 
  between the Legendrian links $\leg$ and $\leg'$ as the minimum genus of any 
  Lagrangian quasi-cobordism between them. Two Legendrian links $\leg$ and 
  $\leg'$ are \emph{Lagrangian quasi-concordant} if $g_L (\leg, \leg') = 0$.
\end{definition}

\begin{example}
  \label{ex:quasi-cob-not-cob}
  Let $\maxunknot$ be the maximal Legendrian unknot, and let $\leg$ be a 
  maximal Legendrian representative of $m (6_2)$.  Note that both $\maxunknot$ 
  and $\leg$ have Thurston--Bennequin number $-1$ and that the smooth $4$-genus 
  of $6_2$ is equal to $1$ \cite{LivMoo:KnotInfo}.  It follows from the 
  behavior of the Thurston--Bennequin invariant under Lagrangian cobordism that 
  there cannot be a Lagrangian cobordism joining $\maxunknot$ and $\leg$ in 
  either direction.  Nevertheless, there is a genus-$1$ Lagrangian 
  quasi-cobordism between the two; see \fullref{fig:quasi-cob-6-2}.

  \begin{figure}[htbp]
    \includegraphics[width=4.5in]{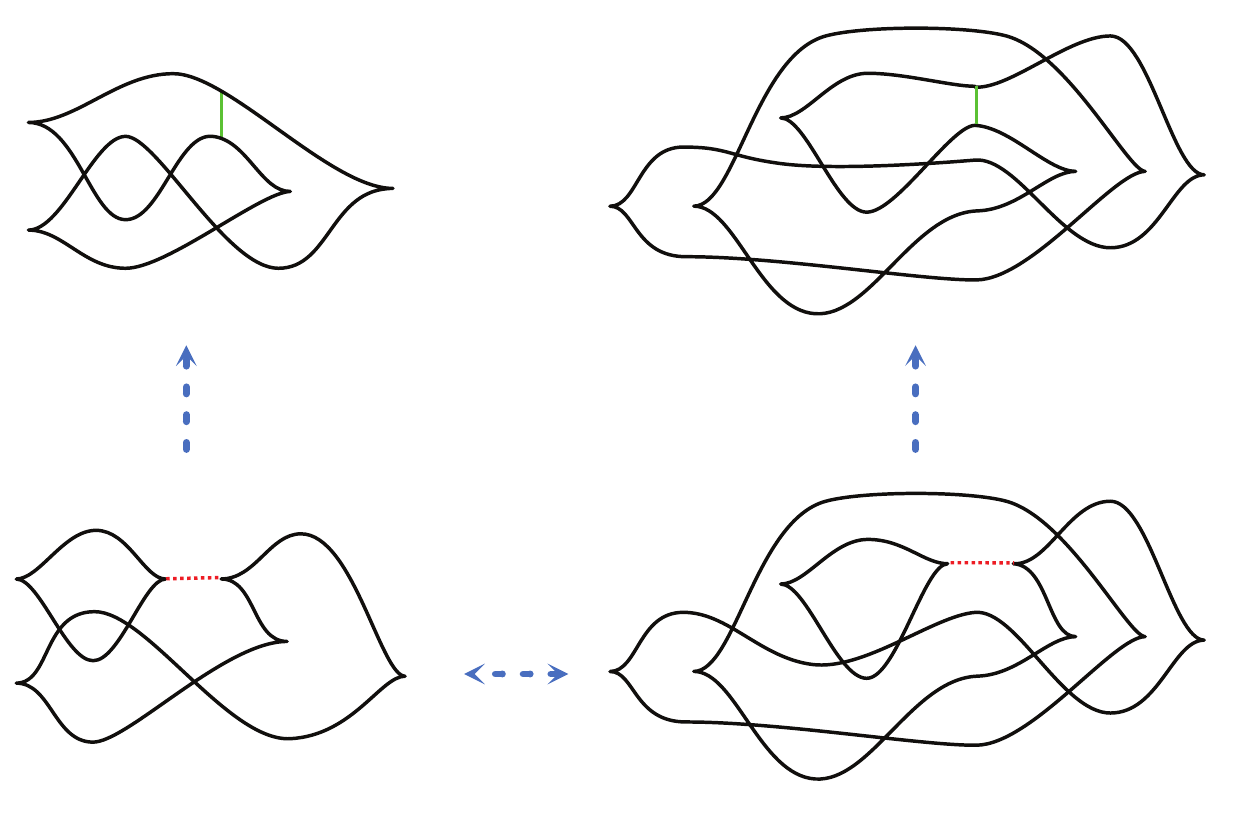}
    \caption{A genus $1$ Lagrangian quasicobordism between the maximal unknot 
      $\maxunknot$ and a maximal representative $\leg$ of the mirror of the 
      $6_2$ knot.  The quasi-cobordism was produced using the ideas in 
      \cite[Section~5]{BorTraYan13:MinLeg}, especially Figure~25 and 
      Figure~27.}
    \label{fig:quasi-cob-6-2}
  \end{figure}
\end{example}

Lagrangian quasi-cobordism induces an equivalence relation on the set of 
isotopy classes of Legendrian links.  As in the smooth case, this equivalence 
relation is uninteresting, as shown by the following immediate corollary of 
\fullref{thm:main} or \fullref{prop:min_general}:

\begin{corollary}
  \label{cor:same-r-quasi-cob}
  Any two Legendrian links with the same rotation number are Lagrangian 
  quasi-cobordant.  In fact, the quasi-cobordism may be chosen to have length 
  $1$.
\end{corollary}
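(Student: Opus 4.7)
The plan is to read off the corollary almost directly from \fullref{thm:main}. Since we work in $(\R^3,\xistd)$, the rotation number of any oriented Legendrian link is defined without reference to a Seifert surface, so the hypothesis $\rot(\legone) = \rot(\legtwo)$ is all that is needed to apply \fullref{thm:main} (and the strengthened version \fullref{prop:slices}).

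First, I would apply \fullref{thm:main} to the pair $\leg, \leg'$, obtaining an oriented Legendrian link $\legp$ together with exact Lagrangian cobordisms $\cob^< \colon \leg \to \legp$ and $\cob^> \colon \leg' \to \legp$ (we could equally well use the lower bound $\legm$). Next, I would package this data into a Lagrangian quasi-cobordism in the sense of \fullref{defn:quasi-cob} by setting
\[
  \legsint = (\leg, \leg'), \qquad \legsuplow = (\legp), \qquad \cobsint = (\cob^<, \cob^>).
\]
Here $\legp$ is by construction an upper bound for the pair $(\leg, \leg')$, so all the conditions of \fullref{defn:quasi-cob} are satisfied. The length of this quasi-cobordism is $n = |\legsint| - 1 = 1$, which simultaneously proves that $\leg$ and $\leg'$ are Lagrangian quasi-cobordant and that the length-$1$ refinement holds.

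There is no real obstacle here; the work has already been done in \fullref{thm:main}. The only point worth flagging in the write-up is the mild translation between the two formalisms: the pair of arrows $\legm \cobrel \leg, \leg' \cobrel \legp$ (or its mirror with $\legm$) guaranteed by the main theorem is exactly the data of a single stage $(\cob_1^<, \cob_1^>)$ in a quasi-cobordism, so no iteration, concatenation, or length-bookkeeping is required.
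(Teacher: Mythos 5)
Your proposal is correct and is exactly the argument the paper intends: the corollary is stated as an immediate consequence of \fullref{thm:main} (or \fullref{prop:min_general}), and packaging the upper bound $\legp$ with its two cobordisms as $\legsint = (\leg, \leg')$, $\legsuplow = (\legp)$, $\cobsint = (\cob^<, \cob^>)$ gives the length-$1$ quasi-cobordism, just as you describe.
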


The corollary shows that the relative Lagrangian genus is defined for any two 
Legendrian links of the same rotation number.

On the other hand, Lagrangian quasi-concordance also clearly induces an 
equivalence relation on the set of isotopy classes of Legendrian links.  The 
relative Lagrangian genus descends to Lagrangian quasi-concordance classes.  
Both the rotation number and the Thurston--Bennequin number 
\cite{Cha10:LagConc} are invariants of Lagrangian quasi-concordance, though 
non-classical invariants coming from Legendrian Contact Homology or Heegaard 
Floer theory will have a more complicated relationship with quasi-concordance.

\subsection{Relation to Smooth Genus}
\label{ssec:lag-genus-sm-genus}

To connect the relative Lagrangian genus to smooth constructions, note that we 
may define the smooth cobordism genus between two smooth knots $K_1$ and $K_2$ 
to be the minimum genus of all cobordisms between them; we denote this by $g_s 
(K_1, K_2)$.  Chantraine proved that Lagrangian \emph{fillings} minimize the 
smooth $4$-ball genus of a Legendrian knot \cite{Cha10:LagConc}, and so one 
might ask if this minimization property extends to $g_L$.  We begin with a 
simple lemma.

\begin{lemma}
  \label{lem:basic-genus-comparison}
  Given Legendrian knots $\leg$ and $\leg'$, we have $g_s (\leg, \leg') \leq 
  g_L (\leg, \leg')$.
\end{lemma}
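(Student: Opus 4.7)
The plan is to exploit the fact that every Lagrangian cobordism is, in particular, a smooth cobordism, and that smooth cobordism, unlike its Lagrangian counterpart, is a symmetric relation. Given a minimal-genus Lagrangian quasi-cobordism $(\legsint, \legsuplow, \cobsint)$ realizing $g_L(\leg, \leg')$, I will produce from it a single smooth cobordism from $\leg$ to $\leg'$ whose genus is at most $g(\legsint, \legsuplow, \cobsint)$, and then take the infimum.

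For each $i = 1, \dotsc, n$, the pair $\cob_i^<$ and $\cob_i^>$ share the link $\leg_i^*$ either as a common top (if $\leg_i^*$ is an upper bound for $(\leg_{i-1}, \leg_i)$) or as a common bottom (if $\leg_i^*$ is a lower bound).  Forgetting the Lagrangian structure, I would reverse whichever of the two is oriented the ``wrong way'' to obtain a smooth cobordism, which does not alter its Euler characteristic.  Gluing the reoriented pair along $\leg_i^*$ produces a smooth cobordism $C_i$ from $\leg_{i-1}$ to $\leg_i$, and concatenating $C_1, \dotsc, C_n$ along the $\leg_i$ yields a smooth cobordism $C$ from $\leg$ to $\leg'$.

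All gluings in this construction occur along disjoint unions of circles, which have Euler characteristic zero, so $\chi$ is additive across the gluings.  Therefore
\[
  \chi(C) = \sum_{i=1}^{n} \paren{\chi(\cob_i^<) + \chi(\cob_i^>)} = \chi(\legsint, \legsuplow, \cobsint).
\]
Since $C$ and the Lagrangians in $\cobsint$ all have boundary consisting of $1$-manifolds (and in particular $C$ has $\bdy C = \leg \disjunion \leg'$), converting Euler characteristic to genus via the standard formula gives $g(C) \leq g(\legsint, \legsuplow, \cobsint) = g_L(\leg, \leg')$.  Taking the infimum over all smooth cobordisms from $\leg$ to $\leg'$ then yields $g_s(\leg, \leg') \leq g_L(\leg, \leg')$.

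The only mild obstacle to watch is the $\chi$-to-$g$ conversion when $C$ happens to be disconnected (for instance, if some quasi-cobordism step introduced an unlinked $0$-handle that never got surgered away).  In that case, any closed components of $C$ may be discarded without decreasing $\chi$ of the cobordism part relative to $g_s$, or alternatively one notes that the genus function is additive over components and the bound still holds.  No contact-geometric input is needed beyond the observation that an exact Lagrangian cobordism is a smooth cobordism.
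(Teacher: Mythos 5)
Your proof is correct and follows essentially the same approach as the paper: forget the Lagrangian structure, reverse the cobordisms that point the ``wrong way,'' glue along the intermediate Legendrians, and invoke additivity of Euler characteristic. Your extra caveat about possible disconnectedness and the explicit $\chi$-to-$g$ conversion is a harmless elaboration on a point the paper treats implicitly.
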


\begin{proof}
  Let $(\legsint, \legsuplow, \cobsint)$ be a Lagrangian quasi-cobordism 
  between $\leg$ and $\leg'$. Assume for ease of notation that each $\leg^* \in 
  \legsuplow$ is an upper bound. Let $\overline{\cob}_i^>$ be the smooth 
  cobordism from $\leg^*_i$ to $\leg_i$ obtained from reversing $\cob_i^>$; 
  note that $\overline{\cob}_i^>$ is not, in general, a Lagrangian cobordism.  
  Since Euler characteristic is additive under gluing, the smooth cobordism 
  $\cob_1^< \circ \overline{\cob}_1^> \circ \cob_2^< \circ \cdots \circ 
  \overline{\cob}_n^>$ has genus $g(\legsint,\legsuplow,\cobsint)$, and hence 
  $g_s(\leg, \leg') \leq g_L(\leg, \leg')$.
\end{proof}

It is natural to ask under what conditions on $\leg_1$ and $\leg_2$---as 
Legendrian or as smooth knots---is the inequality in 
\fullref{lem:basic-genus-comparison} an equality? On one hand, we cannot expect 
to achieve equality in all cases.  

\begin{example}
  Let $\leg$ be any Legendrian knot, and let $\leg'$ be a double stabilization 
  of $\leg$ with the same rotation number as $\leg$.  Since $\leg$ and $\leg'$ 
  have the same underlying smooth knot type, we have $g_s (\leg, \leg') = 0$.  
  On the other hand, let $(\legsint, \legsuplow, \cobsint)$ be a Lagrangian 
  quasi-cobordism between $\leg$ and $\leg'$. Note that $\chi (\cob_i^<), \chi 
  (\cob_I^>) \leq 0$ for all $i$, since each of $\cob_i^<$ and $\cob_i^>$ has 
  at least two boundary components. Since $\tb(\leg) > \tb(\leg')$, some pair 
  $\leg_i$, $\leg_{i+1}$ in $\legsint$ must have different Thurston--Bennequin 
  numbers.  In particular, the bound $\leg^*_i$ must have a different 
  Thurston--Bennequin number than at least one of $\leg_i$ or $\leg_{i+1}$.  It 
  follows that $\chi(\cob_i^<) + \chi(\cob_i^>) < 0$, and hence that $\chi 
  (\legsint, \legsuplow, \cobsint) < 0$. Since $\leg$ and $\leg'$ are knots, 
  this implies that $g(\legsint,\legsuplow,\cobsint) > 0$.  In particular, we 
  have $g_L(\leg, \leg') > 0$ even though $g_s(\leg, \leg') = 0$.
\end{example}

On the other hand, there is a simple sufficient condition for equality in the 
lemma above.

\begin{lemma}
  \label{lem:genus-comparison}
  If the Legendrian knot $\leg$ has a Lagrangian filling, and there exists a 
  Lagrangian cobordism from $\leg$ to $\leg'$, then $g_s(\leg, \leg') = 
  g_L(\leg, \leg')$.
\end{lemma}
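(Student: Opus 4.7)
The inequality $g_s (\leg, \leg') \leq g_L (\leg, \leg')$ is \fullref{lem:basic-genus-comparison}, so the task is to show $g_L (\leg, \leg') \leq g_s (\leg, \leg')$. The plan is to use the hypothesized Lagrangian cobordism $L$ from $\leg$ to $\leg'$ itself as (half of) a length-$1$ quasi-cobordism, then to bound its genus using Chantraine's theorem that Lagrangian fillings realize the smooth $4$-ball genus \cite{Cha10:LagConc}.

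More precisely, taking $\leg^*_1 = \leg'$, the cobordism $L \colon \leg \to \leg'$ together with the trivial Lagrangian cylinder on $\leg'$ exhibits $\leg'$ as an upper bound for the pair $(\leg, \leg')$, yielding a Lagrangian quasi-cobordism whose genus is precisely $g (L)$. Hence $g_L (\leg, \leg') \leq g (L)$, and it suffices to prove $g (L) \leq g_s (\leg, \leg')$.

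The main step is the following genus computation. Let $F$ be a Lagrangian filling of $\leg$, and let $F \cup L$ be the Lagrangian filling of $\leg'$ obtained by concatenation. Since Euler characteristic is additive under gluing along circles, $\chi (F \cup L) = \chi (F) + \chi (L)$. By Chantraine's result, $F$ realizes the smooth $4$-ball genus of $\leg$ and $F \cup L$ realizes that of $\leg'$; writing these as $g_4 (\leg)$ and $g_4 (\leg')$ and solving for $g(L)$ gives
\[
  g (L) = g_4 (\leg') - g_4 (\leg).
\]
On the other hand, given any smooth cobordism $\seif$ from $\leg$ to $\leg'$ of genus $g$, capping the $\leg$ end with a minimal-genus smooth filling of $\leg$ produces a smooth filling of $\leg'$ of genus $g_4 (\leg) + g$, so $g_4 (\leg') \leq g_4 (\leg) + g$. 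Minimizing over $\seif$ gives $g_s (\leg, \leg') \geq g_4 (\leg') - g_4 (\leg) = g (L)$, which completes the chain $g_L (\leg, \leg') \leq g (L) \leq g_s (\leg, \leg')$.

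There is no serious obstacle here: the argument is essentially bookkeeping with Euler characteristics, with the single nontrivial input being Chantraine's minimization theorem, which is precisely where the fillability hypothesis on $\leg$ enters. The only point worth verifying carefully is that the trivial cylinder on $\leg'$ is a permissible second component of the quasi-cobordism and contributes $0$ to the total genus, so that the length-$1$ quasi-cobordism defined above indeed has genus equal to $g (L)$.
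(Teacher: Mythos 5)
Your proposal is correct and follows essentially the same route as the paper: both first obtain $g_L(\leg,\leg') \leq g(L)$ from the length-$1$ quasi-cobordism built from $L$ and the trivial cylinder, and then bound $g(L) \leq g_s(\leg,\leg')$ by capping with the Lagrangian filling of $\leg$ and invoking Chantraine's genus-minimization together with additivity of genus under gluing along the knot $\leg$. The only cosmetic difference is that you apply Chantraine twice to extract the exact identity $g(L) = g_4(\leg') - g_4(\leg)$, whereas the paper applies it once by directly comparing the Lagrangian filling $\cob_0 \circ \cob_1^<$ of $\leg'$ with the smooth filling $\cob_0 \circ \Sigma$.
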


\begin{proof}
  We begin by setting notation.  Let $\cob_0$ be the Lagrangian filling of 
  $\leg$ and let $\cob_1^<$ be the Lagrangian cobordism from $\leg$ to $\leg'$.  
  Taking $\cob_1^>$ to be the trivial cylindrical Lagrangian cobordism from 
  $\leg'$ to itself, and taking $\leg_1^* = \leg'$, we see that 
  \begin{equation}
    \label{eq:min-genus1}
    g_L (\leg, \leg') \leq g (\cob_1^<).
  \end{equation}

  Let $\Sigma$ be the smooth cobordism from $\leg$ to $\leg'$ that minimizes 
  the smooth cobordism genus. We know that $\cob_0 \circ \cob_1^<$ is a 
  Lagrangian filling of $\leg'$, and hence that $g (\cob_0 \circ \cob_1^<) \leq 
  g (\cob_0 \circ \Sigma)$.  Since $\leg$ is a knot, the genus is additive 
  under composition of cobordisms, and we obtain
  \begin{equation}
    \label{eq:min-genus2}
    g (\cob_1^<) \leq g (\Sigma).
  \end{equation}

  Combining \eqref{eq:min-genus1} and \eqref{eq:min-genus2}, we obtain 
  \[g_L(\leg, \leg') \leq g(\Sigma) = g_s(\leg, \leg').\]
  The lemma now follows from \fullref{lem:basic-genus-comparison}.
\end{proof}

\subsection{Open Questions}
\label{ssec:open-q}

We end with a list of questions about Lagrangian quasi-cobordism and 
quasi-concordance beyond the motivating question above about the relationship 
between the relative Lagrangian genus and the relative smooth genus.

\begin{enumerate}
  \item Building off of \fullref{ex:quasi-cob-not-cob}, is there an example of 
    a pair $\leg$ and $\leg'$ that are Lagrangian quasi-concordant but not 
    Lagrangian concordant?
  \item Taking the previous question further, for two Lagrangian 
    quasi-concordant Legendrians $\leg$ and $\leg'$, what is the minimal length 
    of any Lagrangian quasi-concordance between them?  Are there examples for 
    which this minimal length is arbitrarily high?
  \item Even more generally, define $g_L(\leg, \leg', n)$ to be the minimal 
    genus of any Lagrangian quasi-cobordism between $\leg$ and $\leg'$ of 
    length at most $n$.  The sequence $(g_L(\leg, \leg', n))_{n=1}^\infty$ 
    decreases to and stabilizes at $g_L(\leg, \leg')$.  Are there examples for 
    which the number of steps it takes the sequence to stabilize is arbitrarily 
    long?
  \item Can $g_L(\leg, \leg') - g_s (\leg, \leg')$ be arbitrarily large when 
    $\leg$ and $\leg$ both have maximal Thurston--Bennequin invariant?
  \item Can the hypotheses of \fullref{lem:genus-comparison} be weakened to 
    $\leg$ having only an augmentation instead of a filling?
  \item Is there a version of this theory for Maslov $0$ Lagrangians, which 
    would better allow the use of Legendrian Contact Homology, especially the 
    tools in \cite{Pan17:LagCobAug}?
\end{enumerate}


\bibliographystyle{mwamsalphack}
\bibliography{bibliography}


\end{document}